\newlength{\numone}
\newlength{\widone}
\newlength{\numtwo}
\newlength{\widtwo}
\newcounter{countp}
\newtheorem{thm}{Theorem}
\newtheorem{lemma}[thm]{Lemma}
\newtheorem{prop}[thm]{Proposition}
\theoremstyle{definition}
\newtheorem{defin}[thm]{Definition}
\numberwithin{equation}{section}
\author{\Large{Riccardo W. Maffucci}}
\newcommand{\Addresses}{{
		\footnotesize
		
		R.W.~Maffucci, \textsc{EPFL MA SB,
			Lausanne, Switzerland 1015}\par\nopagebreak\vspace{-0.35cm}
		\textit{E-mail address}, R.W.~Maffucci: \texttt{riccardo.maffucci@epfl.ch}}}
\title{\Large{\uppercase{\bf Characterising $3$-polytopes of radius one with unique realisation}}}
\date{}
\def\calB{\mathcal{B}}
\def\calC{\mathcal{C}}
\newcommand{\C}{\mathbb{C}}
\begin{document}
\titleformat{\section}
  {\Large\scshape}{\thesection}{1em}{}
\titleformat{\subsection}
  {\large\scshape}{\thesubsection}{1em}{}
\maketitle


\begin{abstract}
Let $F$ be a planar, $3$-connected graph of radius one on $p$ vertices, with $a$ vertices of degree three. We characterise all unigraphic degree sequences for such graphs, when $a\geq 3$ and $p$ is large enough with respect to $a$. This complements the work of \cite{mafpo4} for $a\leq 3$.
\end{abstract}
{\bf Keywords:} Unigraphic, Unique realisation, Degree sequence, Valency, Planar graph, Graph radius, $3$-polytope.
\\
{\bf MSC(2010):} 05C07, 05C75, 05C62, 05C10, 52B05, 52B10.

\begingroup
\setstretch{-0.5}
\tableofcontents
\endgroup

\section{Introduction}
\subsection{Main Theorem}
The degree sequence of a graph $F$ is
\begin{equation}
\label{eqn:s}
s: d_1,d_2,\dots,d_p
\end{equation}
where $p=|V(F)|$ and $d_i=\deg(v_i)$ for $i=1,2,\dots,p$. We say that a graph has unique realisation, or is unigraphic, if up to isomorphism it is the only graph with such a sequence. For work on unigraphic sequences, see e.g. \cite{koren1,li1975,john80}. A graph has radius one if there is a vertex adjacent to all others. A $3$-polytope is a planar, $3$-connected graph \cite{radste}.

In \cite{mafpo4}, we considered the problem of finding the unigraphic $3$-polytopes of radius one, and we completely characterised these in the cases $a=2$ and $a=3$, where $a$ is the number of vertices of $F$ of degree $3$ (i.e. the number of $3$'s that appear in \eqref{eqn:s}). It is not difficult to show that $a\geq 2$ \cite[Lemma 5]{mafpo4}.

In this paper, we establish a more general result for any feasible value of $a\geq 3$. This complements the work of \cite{mafpo4} for $a=2$. Our main result characterises the families of unigraphic sequences where $a\geq 3$ and $p$ is large enough compared to $a$.
\begin{thm}
	\label{thm:1}
	Let $s$ be a sequence as in \eqref{eqn:s}, and denote by $a$ the number of degree three vertices. Assuming $a\geq 3$ and $p\geq 3a$, then $s$ is unigraphic as a $3$-polytope of radius $1$ if and only if $s$ is one of the following:
	\begin{align*}
	\\B1&: p-1, (x+3)^2, p-1-2x+3, 4^{p-7}, 3^3, & & p\geq 10, \ 3\leq x\leq\lfloor(p-4)/2\rfloor;
	\\B2&: p-1, \left(\frac{p+1}{4}+3\right)^4, 4^{p-9}, 3^4,& & p\geq 15, \ p\equiv 3\hspace{-0.25cm}\pmod 4;
	\\B3&: p-1, \left(\frac{p+3}{5}+3\right)^5, 4^{p-11}, 3^5,& & p\geq 22, \ p\equiv 2\hspace{-0.25cm}\pmod 5;
	\\C&: p-1, x+3, 2(a-1)-x+3, 5^{p-a-3}, 3^a,& & \substack{p\geq 8, \ 3\leq a\leq p/3, \\\ 1+2\lceil(a-2)/2\rceil\leq x\leq 2a-3, \ x \text{ odd};}
	\\D&: p-1, \left(\frac{p}{4}+3\right)^4, 4^{p-8}, 3^3, & & p\geq 12, \ p\equiv 0\hspace{-0.25cm}\pmod 4,
	\end{align*}
	or the exceptional $14,5^9,3^5$.
\end{thm}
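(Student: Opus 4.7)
The plan is to recast the problem in terms of chord configurations on a polygon. If $F$ is a $3$-polytope of radius $1$ with universal vertex $v$, then in any planar embedding the cyclic order of the neighbours of $v$ must trace a Hamilton cycle $C$ of $F-v$: each face incident to $v$ is bounded by two edges at $v$ and a path in $F-v$, and any internal vertex of that path would be a non-neighbour of $v$, contradicting universality. The remaining edges of $F-v$ are then non-crossing chords of $C$, and every non-universal vertex has degree $3$ plus its number of incident chords. Consequently, $F$ is determined, up to isomorphism as a radius-$1$ $3$-polytope, by the cyclic pattern of chord multiplicities on $C$, modulo rotation and reflection; the question becomes the uniqueness of such a chord pattern realising the prescribed multiset.

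A double-counting identity gives $2c = \sum_{i \neq v}(d_i - 3)$, and each family B1, B2, B3, C, D satisfies $c = p-4$, the maximum possible, so the polygon is fully triangulated. For sufficiency, I would exhibit for each family the unique triangulation producing the prescribed multiset of chord counts: family C is a ``path of triangles'' whose two extremes carry the vertices of degree $x+3$ and $2(a-1)-x+3$; family B1 consists of two fans joined at a common triangle, with the three degree-$3$ vertices pinned at the extremal positions; and B2, B3, D arise from $4$, $5$, and $4$ symmetric fans respectively. In each case, starting from a degree-$3$ vertex (that is, a polygon vertex with no chord) and walking around $C$, the chord pattern is forced step by step, establishing uniqueness.

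For necessity, the task is to show that any other sequence with $a \geq 3$ and $p \geq 3a$ admits two non-isomorphic realizations. The main tool is a diagonal flip inside a triangulated polygon: two adjacent triangles form a convex quadrilateral, and swapping the shared diagonal for the other one produces a new triangulation with altered degrees at the four vertices involved. Combining a flip with a rearrangement of a degree-$3$ vertex along $C$ preserves the degree sequence while changing the isomorphism class, and the hypothesis $p \geq 3a$ guarantees enough arcs of consecutive degree-$4$ or degree-$5$ vertices between the degree-$3$ vertices that such a degree-preserving modification is always available outside the listed configurations. I expect the main obstacle to be organising the case analysis so that every sequence outside the list is covered exactly once; my intended stratification is by the multiset of degrees exceeding $5$, handling separately the cases of one, two, four, and five high-valence vertices (which match families C--D, B1, B2, B3), and ruling out all intermediate patterns by explicit flips.

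Finally, the exceptional sequence $14, 5^9, 3^5$ has $c = 9 < p-4$, lies outside the triangulated regime, and sits on the boundary $p = 3a$ where the general argument breaks. I would handle it by direct enumeration of chord configurations on a $14$-gon with nine non-crossing chords of the prescribed multiplicity profile, verifying that exactly one such configuration is 3-connected and produces the required degree sequence.
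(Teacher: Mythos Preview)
Your translation to chord configurations on the Hamilton cycle of $F-v_1$ matches the paper's graph $G=F-v_1-E(H)$, and your observation that each of B1--D has exactly $p-4$ chords (so $F-v_1$ is maximal outerplanar) is correct and is not made explicit in the paper.

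The necessity argument, however, has a structural gap. Diagonal flips live inside a \emph{triangulated} polygon, but a sequence $s$ with $a\ge 3$, $p\ge 3a$ not on the list need not have $c=p-4$: the exceptional $14,5^9,3^5$ is not the only sequence in range with $c<p-4$, merely the only \emph{unigraphic} one, and there are infinitely many non-unigraphic $s$ with $c<p-4$ still to be eliminated. Your phrase ``$p\ge 3a$ guarantees enough arcs \dots'' is exactly the crux of the theorem, and no mechanism is offered. The paper supplies one via the block structure of $G$: if $G-Z$ is disconnected it shows either $s=14,5^9,3^5$ or $p\le 3a-1$, using the inequality $a\ge 2+\sum_j(|V(B_j)|-2)$ over cyclic blocks; in the connected case it splits on whether $B=G-Z-Y$ is $2$-connected and on how the separating vertices of $G$ sit in the cyclic blocks, in every branch either pinning $s$ to one of B1--D or again forcing $p\le 3a-1$. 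That block decomposition is what makes the hypothesis $p\ge 3a$ bite, and a stratification by the multiset of high degrees does not replace it. Indeed, your stratification does not even line up with the families: type~D has four high-degree vertices, type~B1 has three (not two) vertices of degree exceeding~$4$, and your description of type~C as a ``path of triangles'' in the triangulated $(p-1)$-gon would force exactly two ears, contradicting $a\ge 3$. So even restricted to $c=p-4$, the case analysis would have to be rebuilt.
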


We point out that once $a\geq 3$ is fixed and $p$ grows, \textit{all except finitely many} unigraphic $3$-polytopes of radius $1$ with $a$ vertices of degree three will be of one of the types B1, B2, B3, C, D.

On the other hand, if we fix $p$, it is straightforward to see that, if $s$ is unigraphic and $F$ is not a pyramid (i.e. wheel graph), then $2\leq a\leq p/2$ (for the upper bound e.g. one may adapt \cite[Lemma 8]{mafpo4}). Then Theorem \ref{thm:1} classifies the unigraphic $s$ for $2\leq a\leq p/3$ but not for $p/3<a\leq p/2$.

Note that \cite[Theorem 3]{mafpo4} is the special case $a=3$ of Theorem \ref{thm:1}. In \cite{mafpo4}, this special case was proven in a different, more straightforward way. The sequence type C1 from \cite[Theorem 3]{mafpo4} is just the special case $a=3$ of type C from Theorem \ref{thm:1}, and the exceptional $5,4^2,3^3$ and $6,5^3,3^3$ from \cite[Theorem 3]{mafpo4} both satisfy $p\leq 3a-1$.

The unigraphic $3$-polytopes corresponding to these sequences will be described once we have recalled some notation from \cite{mafpo4}.

\begin{defin}
	\label{def:G}
For $F$ a $3$-polytope satisfying $\deg(v_1)=p-1$,  $a\geq 3$, and $p\geq 3a$, we note that $F-v_1$ is a non-empty, planar, Hamiltonian graph. It has a region containing all of its vertices. Henceforth, $H$ will denote a Hamiltonian cycle in $F-v_1$, and 
\[G:=F-v_1-E(H),\]
a non-empty planar graph, of sequence
\begin{equation}
\label{eqn:s'}
s': d_2-3, d_3-3, \dots, d_{p}-3.
\end{equation}
Exactly $a$ entries in $s'$ are zeroes. The isolated vertices of $G$ form the set $Z$. 
\end{defin}

Now we are in a position to better describe the polytopes $F$ corresponding to the sequences in Theorem \ref{thm:1}. We may do this by characterising $G-Z$. In the exceptional $14,5^9,3^5$, $G-Z$ is the disjoint union of three triangles. In all remaining cases $G-Z$ is connected. For types B1, B2, and B3, $G-Z$ is a triangle, triangulated quadrilateral (i.e., diamond graph) and triangulated pentagon respectively, together with vertices of degree one adjacent to the boundary points of the triangle, or triangulated quadrilateral or pentagon. In B1, at least two of the vertices of degree $>1$ have the same degree; in B2 (resp. B3), all of the $4$ (resp. $5$) vertices of degree $>1$ have the same degree.

For C, $G-Z$ is formed of a set of triangles sharing a vertex $u$, another (possibly empty) set of triangles sharing a vertex $v$, and a $uv$-path (that may be trivial, i.e. possibly $u=v$). 
For D, $G-Z$ is a triangle together with a fourth vertex adjacent to exactly one point on the boundary of the triangle, and with extra degree one vertices adjacent each to one of these four vertices, so that these four have the same degree in $G$.

\subsection{Overview of the proof}
The rest of this paper is dedicated to proving Theorem \ref{thm:1}. In section \ref{sec:g41}, we will prove that apart from the exceptional case $14,5^9,3^5$, if $p\geq 3a$ then $G-Z$ is connected. In the second part of the proof (section \ref{sec:g42}), we will analyse several cases for connected $G-Z$, and assuming $s$ is unigraphic, either determine $F$ or bound $p$ with respect to $a$. The first case is when $G-Z-Y$ is $2$-connected, where $Y$ is the set of degree $1$ vertices in $G$ (section \ref{sec:2con}). The second and third cases are for $G-Z-Y$ not $2$-connected, distinguishing between when no block of $G-Z-Y$ contains all vertices that are separating in $G$ (section \ref{sec:c2}), and when such a block exists (section \ref{sec:c3}). A proposition summarising results closes each section. Combining these propositions, we will prove Theorem \ref{thm:1} (section \ref{sec:end}).

One general idea is that, outside of the types of sequence listed in Theorem \ref{thm:1}, the number of vertices of $G$ not lying on a cyclic block of $G$ is \textit{bounded}. 
This fact 
allows us to obtain an upper bound for $p$ depending on $a$.

Everywhere $V$ and $E$ indicate vertex and edge sets. A \textit{caterpillar} is a tree graph where every vertex is within distance one of a central path $c_1,c_2,\dots,c_\ell$, $\ell\geq 1$. We will denote catepillars by $\calC(x_1,\dots,x_\ell)$, where $x_i=\deg(c_i)$.  
A cyclic graph is a graph containing a cycle. A cyclic block is thus any block other than $K_2$. A block of a graph $G$ with only one vertex separating in $G$ is called an endblock.



\subsection{Acknowledgements}
The author was supported by Swiss National Science Foundation project 200021\_184927 held by Prof. M. Viazovska.

\subsection{Data availability statement}
All data generated during this study are included in this article.


\section{First part of the proof}
\label{sec:g41}
Henceforth we assume that the number of degree three vertices in $F$ is $a\geq 3$. Recall Definition \ref{def:G} for $H,G,s',Z$.

Firtly, it is straightforward to adapt the proof of \cite[Lemma 8]{mafpo4} to see that, when $a\geq 3$, $G$ has at least one cycle, with the only exception of $s: 5,4^2,3^3$ (where $p=6$). 
Next, we consider the number $k$ of cyclic connected components in $G$.

\begin{lemma}
	\label{le:pre}
If $s$ is unigraphic, $p\geq 7$, and $a\geq 3$, then $G$ has one, two or three cyclic connected components, and if three, then they are all triangles.
\end{lemma}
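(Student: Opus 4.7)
I argue the contrapositive: supposing $G$ has $k\geq 4$ cyclic components, or $k=3$ with some non-triangular cyclic component, I construct a second $3$-polytope $F'$ of radius one with degree sequence $s$ satisfying $F'\not\cong F$, contradicting unigraphicity. The key structural fact is that $F$, being $3$-connected planar with universal vertex $v_1$, has a unique planar embedding (up to reflection), so $H$ is determined abstractly as the face boundary enclosing $v_1$, and $G=F-v_1-E(H)$ is an abstract invariant. Edges of $G$ are non-crossing chords of $H$, so each cyclic component $C_j$ occupies an arc $I(C_j)\subseteq V(H)$, the arcs of distinct cyclic components are pairwise disjoint, and vertices of $Z$ (and of $Y$) lie between or within these arcs as ``beads''.

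I construct $F'$ by one of two operations. (i) If the cyclic sequence of arcs and beads along $H$ admits a non-trivial rearrangement modulo dihedral symmetry---for instance, varying the distribution of the $a\geq 3$ vertices of $Z$ among the arcs---I build $F'$ using a new Hamiltonian cycle $H'$ in the rearranged order, preserving each cyclic component's chord structure. (ii) Otherwise, all arcs are symmetrically arranged with identical internal structure, in which case I instead modify the chord graph directly: I replace two triangular components of $G$ by a single $6$-cycle on the same $6$ vertices, which preserves chord-degrees (each vertex has chord-degree $2$ in both configurations), yielding a new chord structure on a slightly reordered $H'$. In either operation, $F'$ remains $3$-connected planar with degree sequence $s$.

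For $k\geq 4$ and $a\geq 3$: operation (i) succeeds whenever arcs are distinguishable by bead distribution, producing a different value of the invariant ``number of edges of $F-v_1$ with both endpoints in $Z$''. Otherwise all arcs are identical triangular components, so operation (ii) applies and reduces the number of cyclic components of $G$ from $k$ to $k-1$, another abstract invariant. For $k=3$ with non-triangular $C_3$: $C_3$ is abstractly identifiable as the unique cyclic component of $G$ with more than three vertices, and I compare arrangements differing in whether $Z$-vertices are $H$-adjacent to $C_3$; these are distinguished by the invariant ``number of degree-$3$ vertices of $F$ adjacent in $F$ to a vertex of $C_3$''.

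\textbf{Main obstacle.} The principal difficulty is establishing that the distinguishing quantities above are genuine abstract graph invariants, not embedding-dependent: this hinges on the unique recoverability of $H$ from $F$, the degree-based characterisation of $Z$, and the identification of the cyclic components of $G$ (and in the $k=3$ case, of the non-triangular component $C_3$) purely from the abstract structure of $F$. Once these are secured, the invariant counts separate $F$ from $F'$ in every case, contradicting unigraphicity.
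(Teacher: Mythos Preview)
Your framework is sound: $H$ is indeed recoverable as the boundary of the face of $F$ containing $v_1$ in the (essentially unique) embedding, so $G$ and its component structure are genuine invariants of $F$, and building a second realisation distinguished by such an invariant is the right move. But the dichotomy between your operations (i) and (ii) is incomplete, and this is where the argument breaks.

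For $k\geq 4$: you assert that when bead redistribution fails, ``all arcs are identical triangular components''. Identical, perhaps---but why triangular? Consider $k=4$ pairwise isomorphic $4$-cycles (or larger cycles) with no $Y$-vertices and no tree components, and with $a$ equal to the exact number of separator slots required to keep $G$-adjacent vertices from becoming $H$-adjacent. Then every $Z$-vertex is pinned as a separator, your $ZZ$-edge count is frozen at zero for every admissible arrangement, and operation (i) gives nothing. Operation (ii) as you state it merges two \emph{triangles} into a $6$-cycle, so it does not apply either. You would need to generalise (ii) to arbitrary identical cycles (two $m$-cycles into one $2m$-cycle), and even then verify that the merged cycle embeds as non-crossing chords without creating double edges with $H$; none of this is in the plan.

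For $k=3$: you write that $C_3$ ``is abstractly identifiable as the unique cyclic component with more than three vertices'', but the hypothesis is only that \emph{some} component is non-triangular. If two or all three components are, say, $4$-cycles, then $C_3$ is not singled out and your invariant ``degree-$3$ vertices adjacent to a vertex of $C_3$'' is ill-defined.

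The paper's proof avoids all separator bookkeeping by reordering the \emph{cyclic components themselves} around $H$ rather than the $Z$-beads. It exhibits two explicit cyclic orderings of $\bigcup_l V(G_l)$---one ``nested'', one ``sequential''---and observes that for $k\geq 4$ the nested ordering has four consecutive cyclic-component vertices lying in four distinct components, while the sequential ordering never does; this is an invariant of $F$ independent of where $Z$, $Y$, or tree vertices sit. For $k=3$ it compares two sequential orderings distinguished by whether some vertex of $G_1$ has no $G_1$-vertex immediately before or after it among the cyclic-component vertices; this forces $|V(G_1)|=3$, and the roles of $G_1,G_2,G_3$ are then permuted. This is both shorter and uniform across all component shapes.
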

\begin{proof}
Denote by $G_l$, $1\leq l\leq k$, the cyclic connected components of $G$. For fixed $l$, denote by $u_{l,j}$, $1\leq j\leq i_l$, the $i_l\geq 3$ vertices of $G_l$ in the order that they appear around the Hamiltonian cycle $H$, clockwise starting from $u_{1,1}$. They may be ordered around $H$ as
\[u_{1,1},u_{2,1},\dots,u_{k,1},u_{k,2},\dots,u_{k,i_k},u_{k-1,2},\dots,u_{k-1,i_{k-1}},\dots,u_{2,2},\dots,u_{2,i_2},u_{1,2},\dots,u_{1,i_1}\]
or as
\[u_{1,1},u_{2,1},\dots,u_{2,i_2},u_{1,2},u_{3,1}\dots,u_{3,i_3},\dots,u_{1,i_{1}},u_{i_{1}+1,1},\dots,u_{i_{1}+1,i_{i_{1}+1}},\dots,u_{k,1}\dots,u_{k,i_k}\]
where in the second ordering $u_{l,j}$ actually appears only for $l\leq k$. In the first ordering we have four consecutive vertices belonging to four different components, namely $u_{1,1},u_{2,1},u_{3,1},u_{4,1}$, \textit{unless} $\mathit{k\leq 3}$, whereas it is easy to see that this cannot happen in the second ordering (recall that the $G_l$ are cyclic, hence $i_l\geq 3$ for every $1\leq l \leq k$). Thereby, if $s$ is unigraphic, then necessarily $k\leq 3$.

Now suppose that $k=3$. The second of the above two orderings reads
\begin{equation}
\label{eqn:o1}
u_{1,1},u_{2,1},\dots,u_{2,i_2},u_{1,2},u_{3,1}\dots,u_{3,i_3},u_{1,3},u_{1,4},\dots,u_{1,i_1}.
\end{equation}
We also have the feasible
\begin{equation}
\label{eqn:o2}
u_{1,1},u_{2,1},\dots,u_{2,i_2},u_{1,2},u_{1,3},u_{3,1}\dots,u_{3,i_3},u_{1,4},\dots,u_{1,i_1}.
\end{equation}
In \eqref{eqn:o1}, the vertex $u_{1,2}$ from $G_1$ does not follow or precede any other vertex from $G_1$. In \eqref{eqn:o2}, there is no vertex from any of $G_1,G_2,G_3$ that does not follow or precede any other vertex from the same component, \textit{unless} $\mathit{i_1=3}$ (where $u_{1,1}$ indeed has such property). Therefore, if $s$ is unigraphic and $k=3$, then $G_1$ has exactly three vertices and is cyclic, i.e. it is a triangle. We may change the roles of $G_1,G_2,G_3$ in \eqref{eqn:o1} and \eqref{eqn:o2} to show that $G_2,G_3$ are triangles as well.
\end{proof}

Next, we show that under the same assumptions, there can be no non-trivial tree components.
\begin{lemma}
	\label{le:notree}
	If $s$ is unigraphic, $p\geq 7$, and $a\geq 3$, then $G$ has no non-trivial tree components.
\end{lemma}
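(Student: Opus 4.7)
The plan is to derive a contradiction with unigraphicity by constructing a second $3$-polytope $F'$ of radius $1$ with the same degree sequence $s$, non-isomorphic to $F$. Assume for contradiction that $G$ has a non-trivial tree component $T$. From the opening remark of section~\ref{sec:g41} (an adaptation of \cite[Lemma 8]{mafpo4}), under our hypotheses $G$ contains at least one cycle, since $p \geq 7$ excludes the exceptional $s: 5, 4^2, 3^3$. By Lemma~\ref{le:pre}, the cyclic components of $G$ number between one and three; let $C$ be one of them.

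The construction of $F'$ uses a $2$-switch on the chord diagram of $G$ inside the disc bounded by $H$. Let $u$ be a leaf of $T$ with unique $G$-neighbour $w \in V(T)$, so $\deg_G(u) = 1$ and $\deg_F(u) = 4$. I would select an edge $xy$ of $G$ lying on a cycle of $C$, and replace the chord pair $\{uw, xy\}$ with $\{uy, xw\}$. This operation preserves the degree of every vertex, so $F'$ has degree sequence $s$. The Hamiltonian cycle $H$ is unchanged, hence $F' - v_1$ still has $H$ as a Hamiltonian cycle and is $2$-connected; together with $v_1$ universal this gives $3$-connectedness of $F'$. Planarity of $F'$ is ensured by choosing $xy$ suitably close to $uw$ on $H$ so that the new chords $uy, xw$ cross no remaining chord of $G$.

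To verify $F \not\cong F'$, I would appeal to a structural distinction preserved by isomorphism: in $G$ the vertex $u$ lies in a tree component, while in $G' := F' - v_1 - E(H)$ the component of $u$ contains a cycle, obtained by merging $T$ with $C \setminus \{xy\}$ (still cycle-containing, as $xy$ lies on a cycle of $C$ of length $\geq 3$). Since the planar embedding of the $3$-connected graph $F$ is essentially unique and $v_1$ is the unique universal vertex, the cyclic ordering of vertices around $v_1$ gives a canonical Hamiltonian cycle in $F - v_1$. Any isomorphism $\phi: F \to F'$ must fix $v_1$ and map this canonical Hamiltonian cycle of $F - v_1$ to that of $F' - v_1$, hence induce an iso $G \cong G'$ preserving components. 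The tree-versus-cyclic distinction at $u$ then gives the desired contradiction.

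The main obstacle is the planarity verification of the modified chord diagram: one must guarantee that an edge $xy$ can always be chosen so that the new chords are non-crossing. Because $u$ is a leaf of the planar tree $T$, the chord $uw$ borders an ``empty'' region along $H$, providing local freedom to reroute it; in particular one may try to take $x$ adjacent to $w$, or $y$ adjacent to $u$, on the Hamiltonian cycle. A finite case analysis based on the relative cyclic order of $\{u, w\}$ and the vertices of $C$ along $H$, together with the planarity of $G$ itself, settles the existence of a valid $xy$ in all configurations.
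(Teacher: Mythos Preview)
Your 2-switch idea is natural, but the planarity step is not merely an unwritten case-check --- it can actually fail. Take $T$ to be the path $u_1u_2u_3u_4$ and $C$ a triangle on $c_1,c_2,c_3$, laid out along $H$ in the cyclic order
\[u_1,\ z_1,\ u_2,\ z_2,\ c_1,\ z_3,\ c_2,\ z_4,\ c_3,\ u_3,\ z_5,\ u_4,\ z_6,\]
with the $z_i\in Z$. This is a legitimate planar $G$ with $p=14$, $a=6$. The only leaves of $T$ are $u_1$ and $u_4$, and for either choice the new chord $uy$ (with $y\in\{c_1,c_2,c_3\}$) separates $u_2$ from $u_3$ on $H$ and hence crosses the surviving chord $u_2u_3$; no choice of $xy$ repairs this. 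So the switched diagram is never planar, and your construction does not produce an $F'$ here. There is also a smaller error in the non-isomorphism step: when $C$ is a single cycle (as it must be, for instance, whenever Lemma~\ref{le:pre} gives three cyclic components), $C\setminus\{xy\}$ is a path, so the merged component of $u$ in $G'$ is a \emph{tree}, contrary to your claim. The invariant that actually works is that the number of non-trivial components of $G$ drops by one under the switch.

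The paper's argument avoids any chord-crossing analysis. It first disposes of the configurations with at least two cyclic components plus a tree, or with two non-trivial trees, via the three-non-isomorphic-components obstruction from \cite{mafpo4}. In the residual case of exactly one cyclic $G_1$ and one tree $T$, instead of a 2-switch it rotates and reflects the placement of $V(G_1)$ around $H$ relative to $T$; unigraphicity forces all such placements to yield isomorphic $F$, which pins $G_1$ down to a bare cycle $C_i$. Then $s'$ reads $x_1,\dots,x_\ell,2^i,1^b,0^a$, and a second realisation is exhibited directly as the single caterpillar $\calC(x_1,\dots,x_\ell,2,\dots,2)$.
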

\begin{proof}
By Lemma \ref{le:pre}, there is at least one cyclic component of $G$. If there are two or more, and if by contradiction there is at least one non-trivial tree component, then there are three non-trivial components that are not all isomorphic. A slight generalisation of the scenario in $a=2$ of two copies of $K_2$ and a star that is not $K_2$ -- refer to \cite[section 2 and Figure 1c]{mafpo4}, tells us that this is impossible for $s$ unigraphic. By the way, it follows that in the case $k=3$ if $s$ is unigraphic then $s$ is simply $14,5^9,3^5$ (i.e. $G$ is the disjoint union of three triangles and five isolated vertices). The same argument excludes the case of exactly one cyclic component and two or more non-trivial trees.

It remains to analyse what happens for exactly one cyclic component $G_1$ and one non-trivial tree $T$. As shown in \cite{mafpo4}, $T=\calC(x_1,\dots,x_\ell)$ is a caterpillar, and we may refer to \cite[Remark 7]{mafpo4} for information about how the elements of $V(T)$ are ordered around the cycle $H$. Let $u_1,\dots,u_i$ be the vertices of $G_1$ in order around $H$. We can choose any among $u_1,u_2$, or $u_2,u_3$, $\dots$, or $u_{i_1},u_1$ to be the two closest vertices on $H$ to the elements of $V(T)$, and moreover we can choose to order $u_1,u_2,\dots,u_{i}$ clockwise or counter-clockwise around $H$. We see that if $u_1u_j$, $3\leq j\leq i-1$ is any edge, then also $u_2u_{j+1}\in E(G)$ by reordering $u_1,\dots,u_i$ around $H$ as above, contradicting planarity -- refer to Figure \ref{pic:001}. Therefore, $G_1=C_i$ is just an $i$-gon.
\begin{figure}[h!]
	\centering
		\includegraphics[width=3.5cm,clip=false]{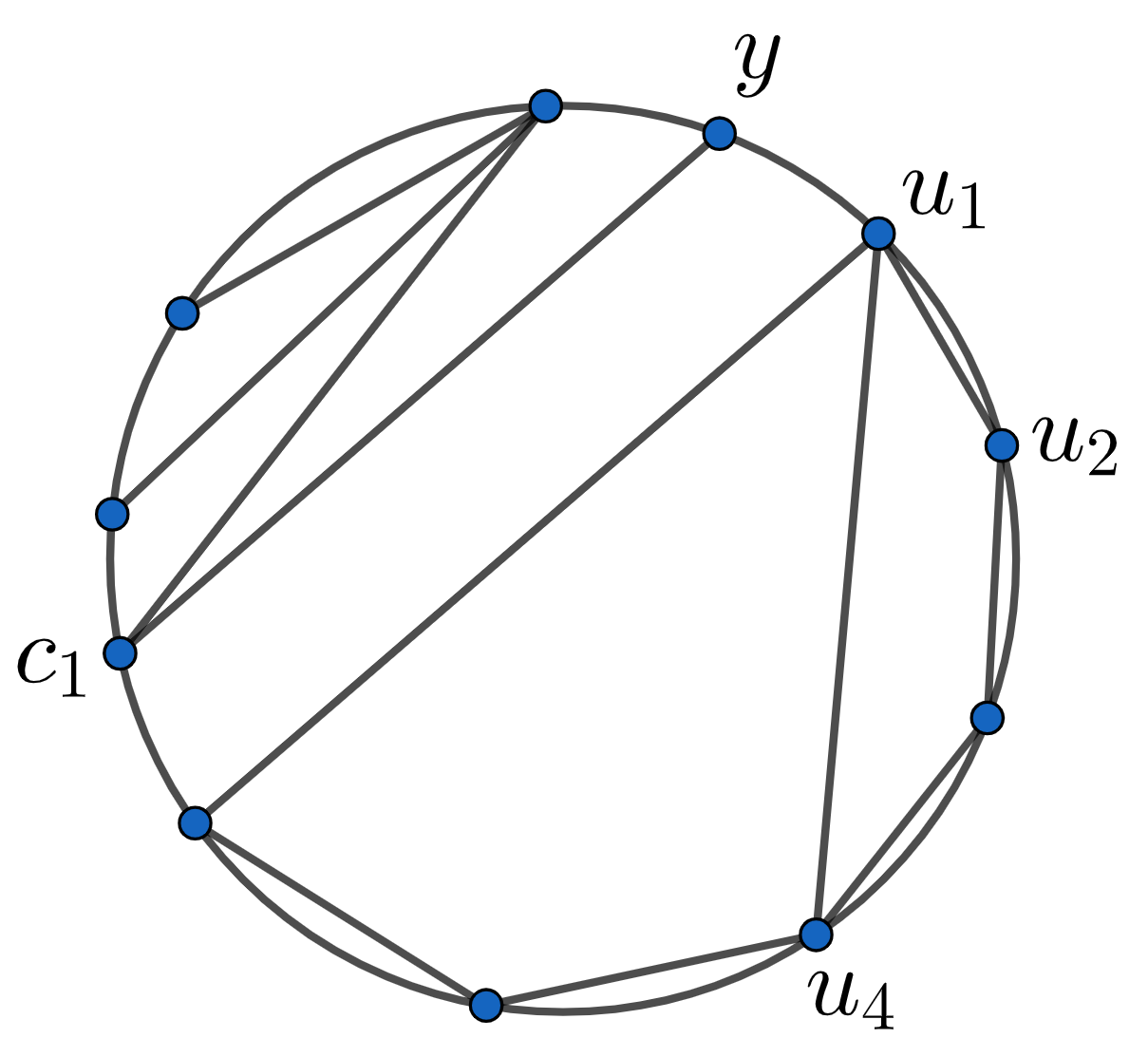}
		\hspace{1cm}
		\includegraphics[width=3.5cm,clip=false]{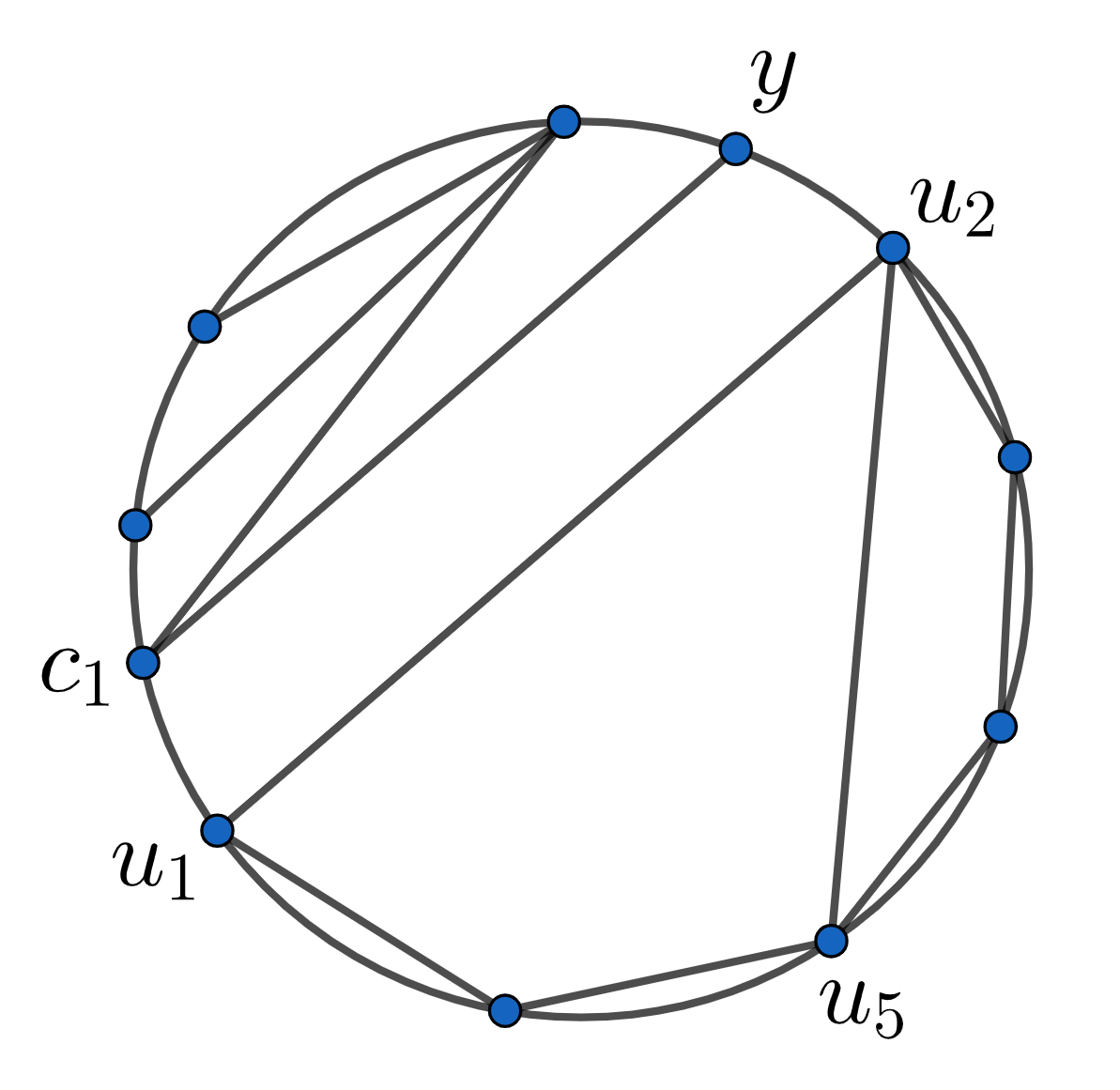}
		\caption{In this example, $u_i,u_1$ are the closest vertices of $G_1$ to $V(T)$ around $H$. Letting $u_1u_4\in E(G)$, by unigraphicity it follows that $u_2u_5\in E(G)$, contradicting planarity.}
	\label{pic:001}
\end{figure}

It follows that
\[s': x_1,\dots,x_\ell,2^i,1^{b},0^a\]
for some $b\geq 2$. Then there is another realisation of $G$ as $\calC(x_1,\dots,x_\ell,2,\dots,2)$, where $2$ appears $i$ times after $x_\ell$ (together with $a$ isolated vertices), contradiction.
\end{proof}

Now assume that $G$ has exactly two cyclic components $G_1,G_2$ (and thanks to Lemma \ref{le:notree} there are no non-trivial tree components). Our goal for the rest of this section is to show that in this case $p\leq 3a-1$. The argument starts similarly to the case of $G_1,T$ of Lemma \ref{le:notree}: to not contradict unigraphicity or planarity, at least one of $G_1,G_2$ is just an $i$-gon, say $G_1=C_i$. 
Thereby, $G_2$ cannot contain acyclic blocks: by contradiction, let $w,w'\in V(G_2)$ be the endpoints of an acyclic block $ww'$. Then we contradict unigraphicity by writing
\[G-ww'-u_1u_i+wu_1+u_iw',\]
where $u_1,\dots,u_i$ are the vertices of $G_1=C_i$ in order around the cycle.

Thanks to \cite[Lemma 5]{mafpo4},
\[
a\geq i+\sum_{j\geq 3}(j-2)\cdot B_G(j),
\]
where $B_G(j)$ counts blocks of $G_2$ bounded by a $j$-gon. As all blocks of $G_2$ are cyclic, we may rewrite
\[a\geq |V(G_1)|+|V(G_2)|-\#\{\text{blocks of }G_2\}\geq |V(G_1)|+\frac{|V(G_2)|-1}{2},\]
so that we have the bound
\[p=1+a+|V(G_1)|+|V(G_2)|\leq 3a+2-|V(G_1)|\leq 3a-1,\]
as claimed.

The arguments of this section imply the following.

\begin{prop}
	\label{prop:rec1}
If $s$ is unigraphic, $a\geq 3$, and $p\geq 3a$, then either $s$ is 
$14,5^9,3^5$, or $G$ (Definition \ref{def:G}) has exactly one non-trivial connected component, and this component contains a cycle.
\end{prop}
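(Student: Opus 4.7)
The plan is to assemble the results of Lemmas \ref{le:pre} and \ref{le:notree} together with the two-cyclic-component bound established in the preceding paragraphs of this section. Let $k$ denote the number of cyclic connected components of $G$.

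First I would observe that the hypotheses $a\geq 3$ and $p\geq 3a\geq 9$ imply $p\geq 7$, so both Lemma \ref{le:pre} and Lemma \ref{le:notree} apply. The former gives $k\in\{1,2,3\}$, and the latter rules out non-trivial tree components. Hence every non-trivial connected component of $G$ is cyclic, and the argument reduces to a case split on $k$.

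If $k=1$, the unique cyclic component is automatically the only non-trivial component, which is the desired conclusion. If $k=3$, the side-remark recorded inside the proof of Lemma \ref{le:notree} (obtained from the general ``three non-trivial components not all isomorphic'' scenario) forces $s=14,5^9,3^5$, producing the exceptional case of the proposition. If $k=2$, I would invoke the chain of inequalities established just above the proposition statement: one of $G_1,G_2$ is a cycle $C_i$ (via the clockwise/counter-clockwise re-ordering argument borrowed from Lemma \ref{le:notree}), the other component has no acyclic blocks (via the edge-swap $G-ww'-u_1u_i+wu_1+u_iw'$), and then \cite[Lemma 5]{mafpo4} combined with the block count yields $a\geq |V(G_1)|+(|V(G_2)|-1)/2$, which rearranges to $p\leq 3a-1$. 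Under the standing assumption $p\geq 3a$, the case $k=2$ is therefore vacuous.

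These three cases are exhaustive, so the proposition follows. The main obstacle is really only bookkeeping: one has to be careful that the parenthetical deduction tucked inside the proof of Lemma \ref{le:notree} genuinely pins $s$ down to $14,5^9,3^5$ in the $k=3$ situation (rather than leaving a one-parameter family of potential exceptions), and that the $k=2$ bound strictly contradicts $p\geq 3a$ rather than merely $p>3a$. Both points go through cleanly once the supporting lemmas and the block-count inequality above are in place.
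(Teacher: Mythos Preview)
Your proposal is correct and follows essentially the same route as the paper: you assemble Lemma~\ref{le:pre}, Lemma~\ref{le:notree}, the $k=3$ side-remark inside the latter's proof, and the $k=2$ block-count bound $p\leq 3a-1$ derived just before the proposition, then do the obvious case split on $k$. This is exactly how the paper structures Section~\ref{sec:g41}, culminating in Proposition~\ref{prop:rec1}.
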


\section{Second part of the proof}
\label{sec:g42}
Thanks to Proposition \ref{prop:rec1}, to prove Theorem \ref{thm:1} it remains to inspect the scenario when $G$ (of Definition \ref{def:G}) has exactly one cyclic connected component. Let
\begin{align}
\label{eqn:B}
\notag Z&:=\{\text{vertices of degree 0 in }G\}=\{z_1,\dots, z_a\},\\
\notag Y&:=\{\text{vertices of degree 1 in }G\},\\
B&:=G-Z-Y.
\end{align}

\subsection{$B$ is $2$-connected}
\label{sec:2con}
In this section we suppose that $B$ in \eqref{eqn:B} is $2$-connected. Say that the vertices $b_1,\dots,b_i$ of $B$ are ordered clockwise around the Hamiltonian cycle $H$. If $y\in Y$, $yb_j\in E(G)$ for some $1\leq j\leq i$, then by planarity $y$ lies on either the $b_{j-1}b_j$- or $b_jb_{j+1}$-path in $H$ not containing any of the other vertices of $B$. These two possibilities mean that there is more than one realisation of $s$, unless possibly when either $|Y|\leq 1$, or when every $b_j$ is adjacent to at least one element of $Y$ (cf. \cite[Figure 3a]{mafpo4}).

In the former case, we write $p=1+a+|Y|+i$. Now, $b_ib_1$ and $b_jb_{j+1}$ for all $1\leq j\leq i-1$ are edges of $B$, so that they cannot be edges of $H$. We deduce that there is at least one element of $Z$ between every pair of consecutive vertices of $B$ along $H$, thus $i\leq a$. Therefore, we have the admissible bound on the order of the graph
\[p\leq 1+a+|Y|+a\leq 2a+2.\]

We are left with the case of every $b_j$ adjacent to at least one element of $Y$.
\begin{itemize}
\item
If $B$ is just a cycle of length $i\geq 3$, then $s'$ reads
\[\deg(b_1),\dots,\deg(b_i),1^b,0^d, \qquad\qquad b=\sum_{j=1}^{i}\deg(b_j)-2i.\]
For $i\geq 4$ we may alter $G$ as follows. Let $y\in Y$ be adjacent to $b_1$. We take
\[G-yb_1-b_2b_3+yb_2+b_1b_3.\]
Then $s$ is not unigraphic for $i\geq 4$. On the other hand, when $i=3$, one can check \cite[section 3 and Figure 5b]{mafpo4} that $s$ is unigraphic if and only if $a=3$ 
and at least two of  $\deg(b_1),\deg(b_2),\deg(b_3)$ are equal -- $s$ is of type B1.
\item
Now let $B$ be a triangulated $i$-gon, $i\geq 4$. For $i\geq 6$, we may alter $G$ as follows. Take any triangulated hexagon in the triangulation of $B$, delete one diagonal $b_{j_1}b_{j_2}$ of the hexagon and add another diagonal $b_{j_3}b_{j_4}$, with $j_1,j_2,j_3,j_4$ distinct. This has the effect of decreasing by $1$ the values $\deg(b_{j_1}),\deg(b_{j_2})$ and increasing by $1$ the values $\deg(b_{j_3}),\deg(b_{j_4})$. We then take two vertices in $Y$ adjacent one each to $b_{j_3},b_{j_4}$, and make them adjacent (one each) to $b_{j_1},b_{j_2}$ instead (this may be done without altering the value of $a$). Then $s$ is not unigraphic for $i\geq 6$. Now let $i=4,5$ (and here there is only one way to triangulate a quadrilateral or pentagon). It is straightforward to check that $s$ is unigraphic if and only if all vertices of $B$ have the same degree in $G$, and moreover $a=i$. We get the types B2 and B3 for $i=4,5$ respectively.

\item
It remains to inspect the case where $B$ is neither a cycle nor a triangulated polygon. Then there exist two adjacent regions $R_1$ and $R_2$ in $B$, of respective boundary lengths $i_1,i_2$, such that $i_1\geq i_2$ and $i_1\geq 4$. Similarly to the case where $B$ is a cycle, we delete the edge $b_{j_1}b_{j_2}$ between $R_1,R_2$, add the edge $b_{j_1}b_{j_3}$, where $b_{j_3}\neq b_{j_1}$ is adjacent to $b_{j_2}$ on the boundary of $R_1$, then take a vertex in $Y$ adjacent to $b_{j_3}$, and make it adjacent to $b_{j_2}$ instead (again we do not alter $a$). This is another realisation of $s$, and if $i_1\neq i_2+1$ it is clearly not isomorphic to the initial one, as $i_1$ has decreased by $1$ and $i_2$ increased by $1$. If $i_1=i_2+1$ and $i_2\geq 4$, we perform the transformation above but exchanging the roles of $R_1,R_2$ to reach the same conclusion.

Finally, if $i_1=4$ and $i_2=3$, then $R_1,R_2$ form a pentagon $b_{j_1},b_{j_2},b_{j_3},b_{j_4},b_{j_5}$ with a diagonal $b_{j_1},b_{j_4}$, say. We perform the transformation
\[G-b_{j_3}b_{j_4}+b_{j_1}b_{j_3}-yb_{j_1}+yb_{j_4}\] 
with $y\in Y$, to conclude that $s$ is not unigraphic in this case.
\end{itemize}

We summarise the results of this section as follows.
\begin{prop}
	\label{prop:rec2}
Assume that $s$ is unigraphic, $G$ (Definition \ref{def:G}) has exactly one cyclic component, and $B$ in \eqref{eqn:B} is $2$-connected. Then either $s$ is of type B1, B2, or B3, or $p\leq 2a+2$.
\end{prop}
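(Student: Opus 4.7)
The plan is to exploit how the degree-one vertices in $Y$ can attach to $B$ along the Hamiltonian cycle $H$, combined with edge-swap arguments that produce alternative realizations of $s$ whenever $B$ enjoys any structural flexibility.

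First I would fix the cyclic order $b_1,\dots,b_i$ of $V(B)$ around $H$. Because $B$ is $2$-connected, its planar embedding is essentially determined by this cyclic order, and any $y\in Y$ with $yb_j\in E(G)$ must lie on one of the two arcs of $H$ immediately flanking $b_j$. The choice between these two arcs typically yields two non-isomorphic realizations of $s$, the only exceptions being (i) $|Y|\leq 1$, so that no such choice exists, and (ii) every $b_j$ is adjacent to some element of $Y$, so that sliding along $H$ is compensated by the structure of the attachment pattern.

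In case (i) I would simply count: since $B$ is $2$-connected, no edge of $B$ can coincide with an edge of $H$, so consecutive $b_j,b_{j+1}$ along $H$ must be separated by at least one vertex of $Z$. This forces $i\leq a$ and hence $p=1+a+|Y|+i\leq 2a+2$. In case (ii) I would analyze $B$ by its shape. If $B$ is a cycle of length $i\geq 4$, an edge swap of the form $G-yb_1-b_2b_3+yb_2+b_1b_3$ with $y\in Y$ adjacent to $b_1$ produces a distinct realization, contradicting unigraphicity; for $i=3$ the only remaining flexibility lies in the degrees at the three vertices, and checking when this is rigid recovers exactly type B1. If $B$ is a triangulated $i$-gon with $i\geq 6$, then inside any triangulated hexagonal sub-region a diagonal swap $b_{j_1}b_{j_2}\mapsto b_{j_3}b_{j_4}$ paired with a relocation of two $Y$-attachments (from $b_{j_3},b_{j_4}$ to $b_{j_1},b_{j_2}$) produces a different realization; when $i=4$ or $i=5$ the triangulation is unique, and unigraphicity forces the boundary vertices to share a common degree, giving types B2 and B3 respectively. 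Finally, if $B$ is neither a cycle nor a triangulated polygon, two adjacent faces $R_1,R_2$ of $B$ with boundary lengths $i_1\geq i_2$ and $i_1\geq 4$ allow a boundary-edge shift combined with a relocation of a $Y$-neighbor to generate a distinct realization; the delicate borderline case $i_1=4$, $i_2=3$ can be handled by a three-swap variant inside the resulting pentagon with one diagonal.

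The main obstacle I anticipate is the bookkeeping in this case analysis by shape of $B$: for each proposed alternative realization one has to verify simultaneously that (a) planarity and $3$-connectedness (hence the $3$-polytope property of $F$) are preserved, (b) the sequence $s$ is unchanged, and in particular that $a$ is unchanged, i.e.\ no degree-$3$ vertex of $F$ is created or destroyed by the move, and (c) the new graph is not isomorphic to the original. Condition (c) is delicate precisely at the exceptional small sizes where $B$ has extra symmetries (cycles of length $3$, the triangulated quadrilateral and pentagon, or $i_1=i_2+1$), and it is exactly these symmetric configurations that survive as types B1, B2, B3.
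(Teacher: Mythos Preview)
Your proposal is correct and follows the paper's argument almost verbatim: the same dichotomy $|Y|\leq 1$ versus every $b_j$ touching $Y$, the same counting bound $i\leq a$ in the first case, and the same case split (cycle, triangulated polygon, neither) with the identical edge-swaps in the second. One small point of phrasing: in case (i) the reason $b_jb_{j+1}\notin E(H)$ is simply that $E(G)\cap E(H)=\emptyset$ by the definition of $G$; the $2$-connectedness of $B$ is what guarantees that $b_jb_{j+1}\in E(B)$ in the first place (the outer boundary of $B$ is a cycle in the cyclic order of $H$).
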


\subsection{$B$ is not $2$-connected, case 1}
\label{sec:c2}
In this section we suppose that $B$ in \eqref{eqn:B} is not $2$-connected. For $s$ unigraphic, in a cyclic block of $G$ either one, or all vertices are separating in $G$: the idea is similar to the first argument in section \ref{sec:2con}.  

\textit{Assume in this section that each cyclic block of $G$ contains exactly one vertex that is separating in $G$}. We deduce that, if we delete from $G$ all cyclic blocks, we are left with one tree, that is a caterpillar $T=\calC(x_1,\dots,x_\ell)$ due to previous arguments (and may be trivial). It follows that there are two possibilities. Either $T$ is trivial, and then $G$ has exactly one separating vertex $u$, contained in every non-trivial block of $G$; or $T$ is non-trivial, $G$ has two separating vertices $u,v$ (other than the non-degree one vertices $c_j$ of $T$), these $u,v$ have degree $1$ in $T$, and each cyclic block of $G$ contains exactly one of $u,v$. Here we denote by $\calB$ the non-empty set of cyclic blocks containing $u$, and by $\calB'$ the possibly empty set of cyclic blocks containing $v$. Further, we quickly see that $u$ cannot be adjacent to any element of $Y$, and as for $v$, it can only be adjacent to elements of $Y$ in case $\calB'=\emptyset$ (cf. \cite[section 3, and Figures 3a, 4b]{mafpo4}). 

Our next claim is that actually $T=\calC(2,\dots,2)$ (supposing that $T$ is non-trivial). By contradiction, assume that $x_j\geq 3$ for some $1\leq j\leq\ell$. Now by reordering the corresponding vertices $c_j$ on the central path of the caterpillar, we may take $j=1$. Let $B_{1}$ be a cyclic block of $G$, 
$V(B_1)=\{u_1,\dots,u_{i-1},u=u_i\}$,
so that we have in order around $H$ 
\[c_1,u_1,\dots,u_{i-1},\{\text{remaining vertices of }\calB\},u,y,A,\]
with $c_1y\in E(G)$, $\deg_{G}(y)=1$, and $A$ a non-empty set, since $\deg(c_1)\geq 3$ (e.g. Figure \ref{pic:002}, left). We take
\[G-yc_1+c_1u_1-u_1u_2+u_2y\]
moving $y$ to the $u_2u_3$-path in $H$, and moving the isolated vertices of $G$ lying between $u_1,u_2$ to the $c_1u_1$-path in the new graph (Figure \ref{pic:002}, right). This new graph is not isomorphic to $G$, as removing all cyclic blocks now leaves two non-trivial trees (it is essential that $A$ is non-empty, i.e. $\deg(c_1)\geq 3$). Hence $s$ is not unigraphic.
\begin{figure}[h!]
	\centering
	\includegraphics[width=3.5cm,clip=false]{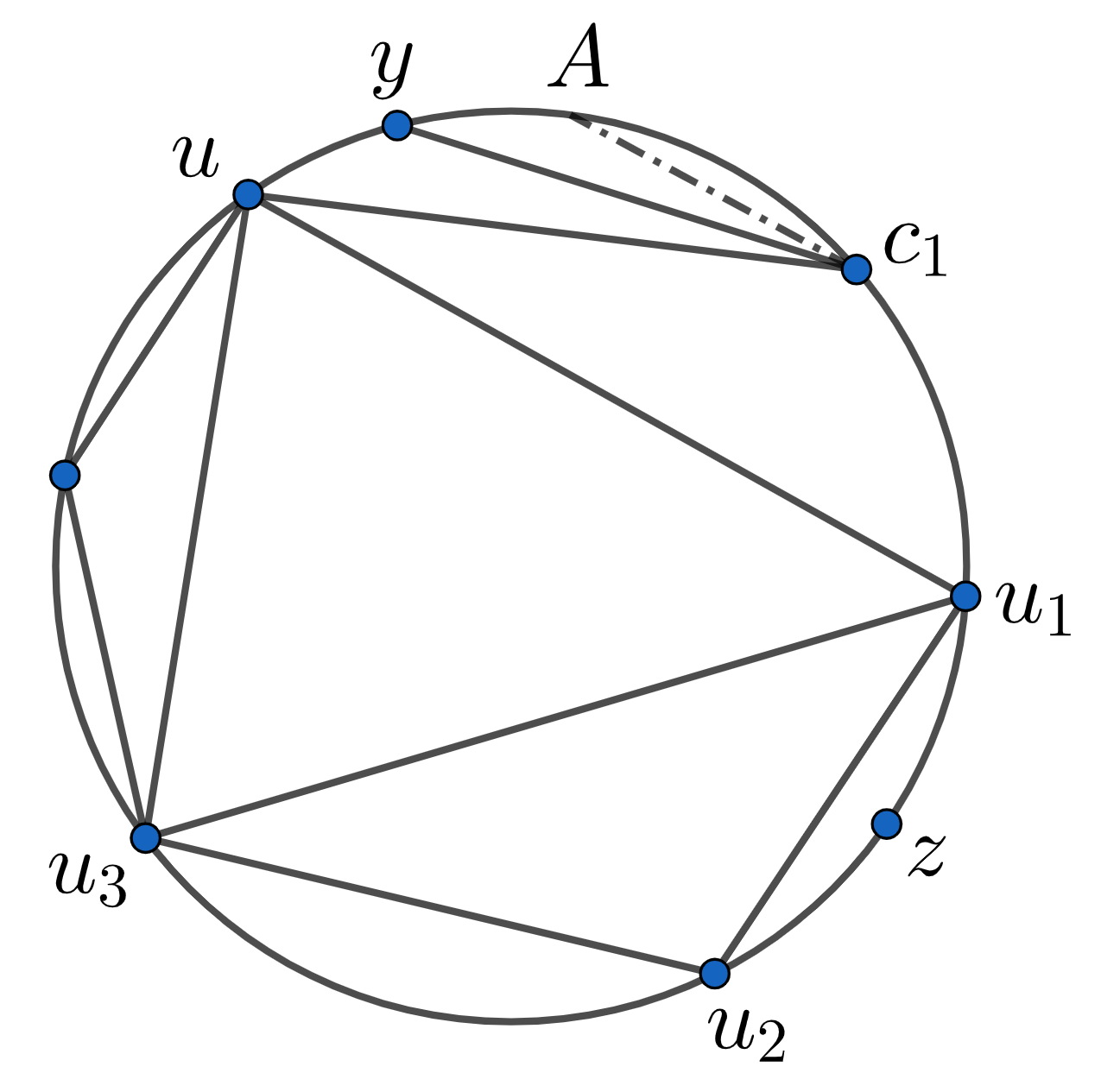}
	\hspace{1cm}
		\includegraphics[width=3.5cm,clip=false]{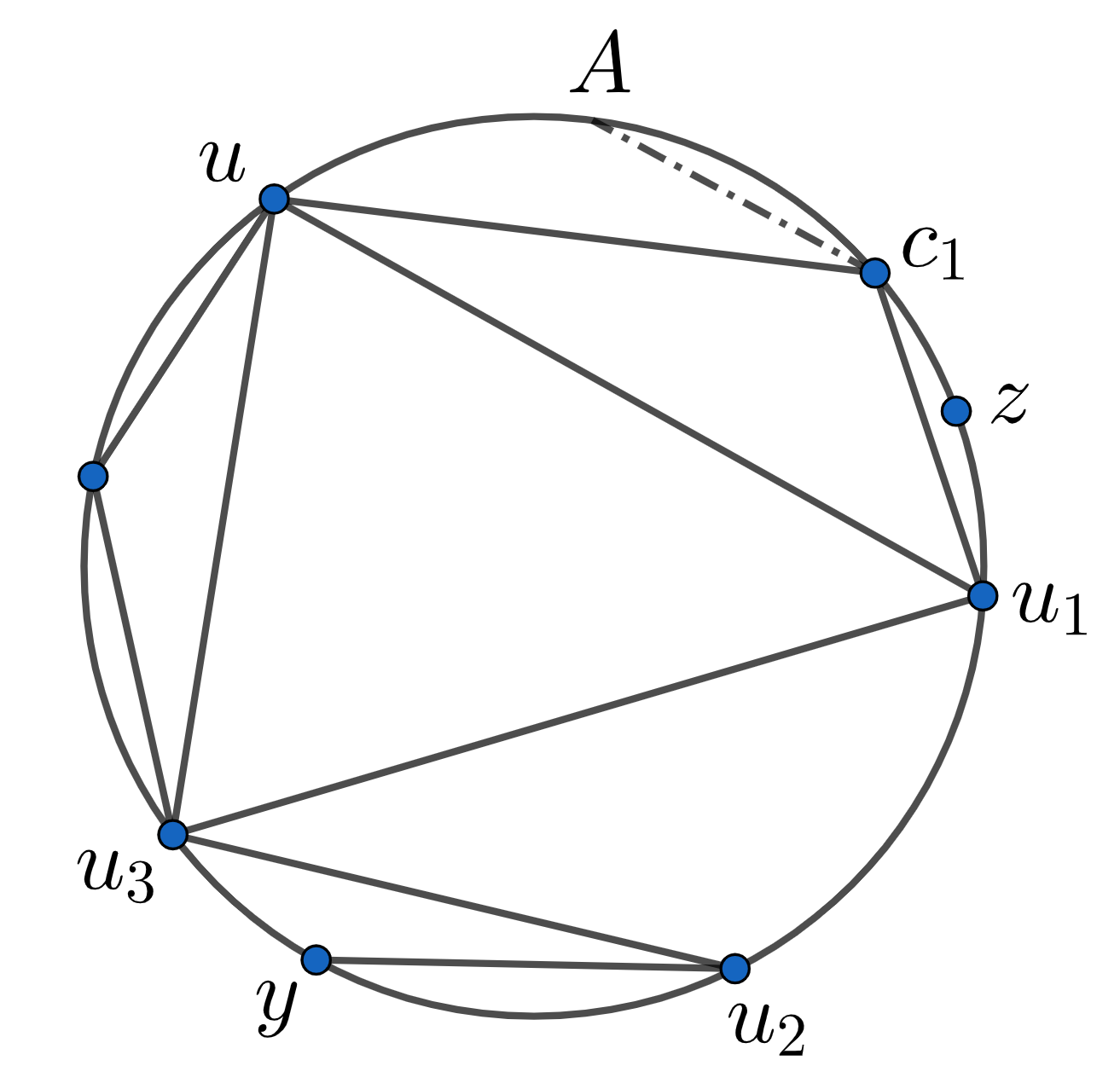}
	\caption{In this example, $z\in Z$, and the dashed-dotted line represents edges incident to a non-empty graph $A$. We transform the graph $G$ on the left to $G-yc_1+c_1u_1-u_1u_2+u_2y$   on the right.}
	\label{pic:002}
\end{figure}


Next, we will show that if a cyclic block $B_1$ of $G$ is adjacent to a final vertex of a path $T$ of length at least $3$, then $B_1$ is a triangle. Consistent with previous notation, call $u$ the vertex of the block that is the starting point on the path, and the subsequent ones along the path $c_1,c_2,\dots,c_\ell,c_{\ell+1}=v$, $\ell\geq 2$. W.l.o.g., the closest vertex of an element of $\calB$ to $u$ along $H$ lies on $B_1$. We perform the transformation
\[G-c_2c_3+c_2u+c_3w-wu,\]
where $w\in V(B_1)$ is the vertex of $B_1$ closest to $u$ along $H$. We then reorder the vertices of $B_1$ and $T$ around $H$ (e.g. Figure \ref{pic:003}) so that 
the minimum value of $a$ for $G$ has not increased. This is possible as deleting $wu$ removes a region, and we added the triangular region of boundary $uc_1c_2$. We may move the isolated vertices of $G$ lying between $u$ and $w$ to the $c_1c_2$-path in the transformed graph. This transformed graph is not isomorphic to $G$ unless $B_1$ is a triangle, as claimed.

\begin{figure}[h!]
	\centering
	\includegraphics[width=3.5cm,clip=false]{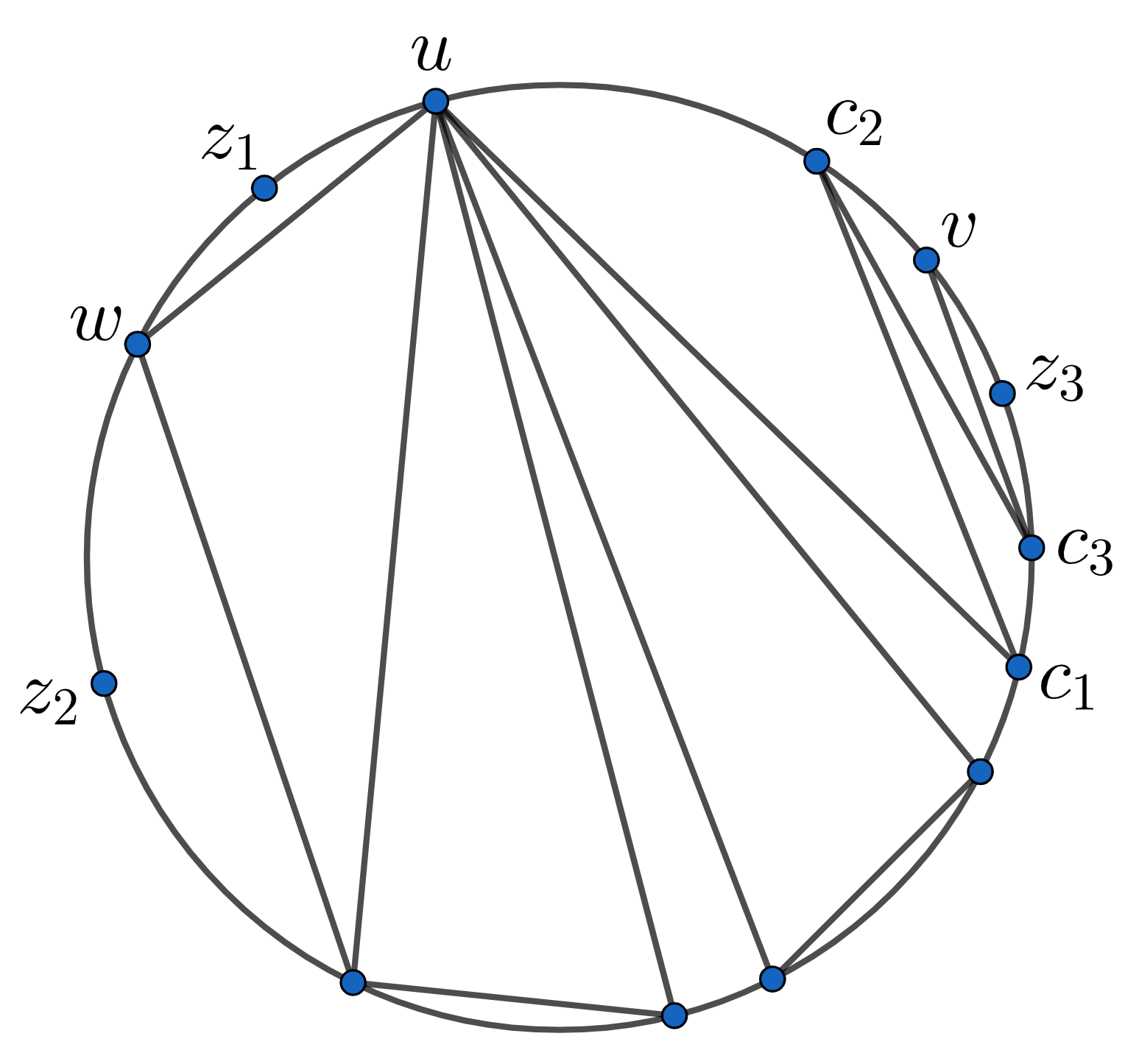}	
	\hspace{1cm}
	\includegraphics[width=3.5cm,clip=false]{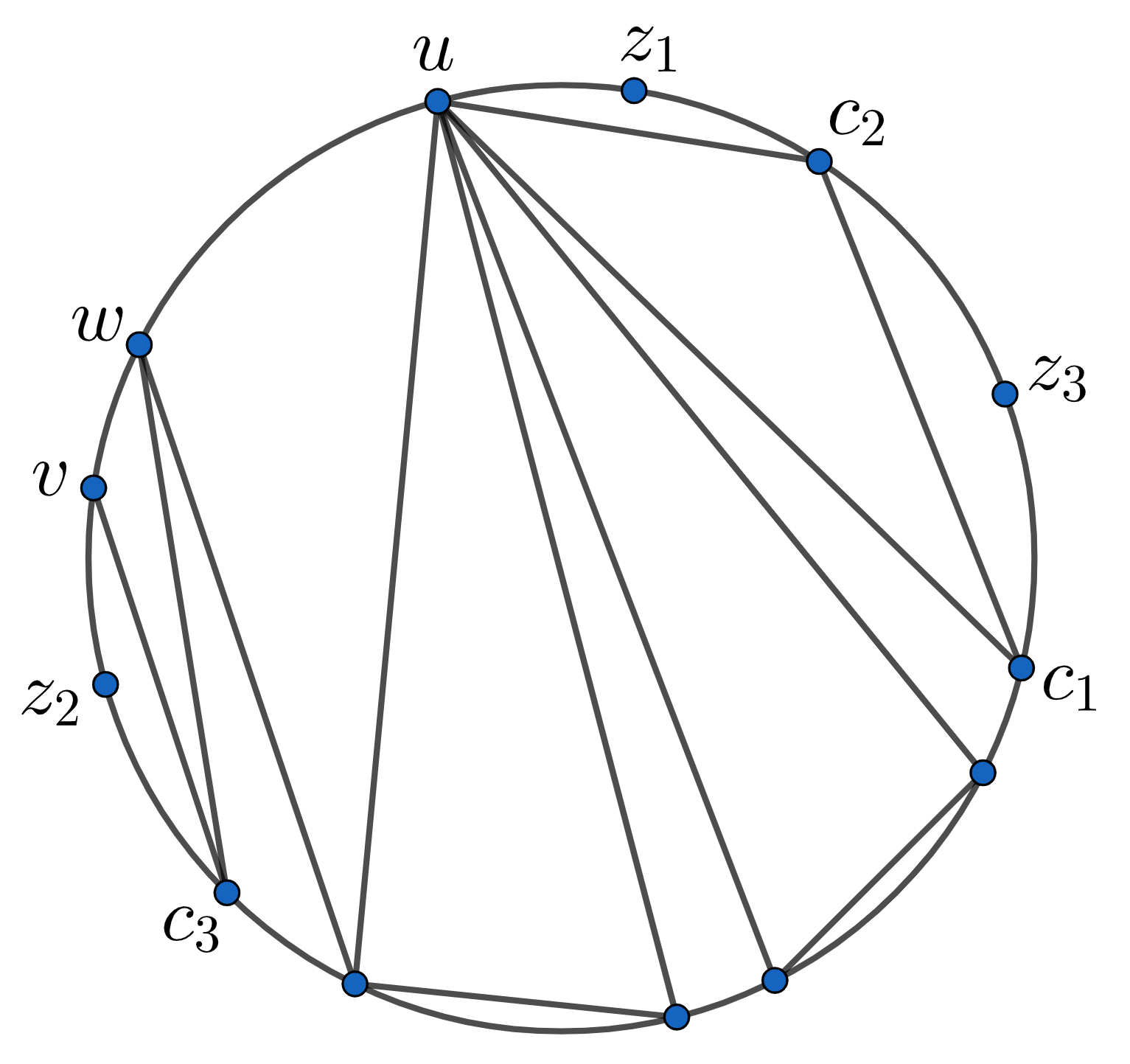}
	\caption{In the depicted example, $z_1,z_2,z_3\in Z$, and $w$ is the vertex of $B_1$ closest to $u$ along $H$. We transform the graph on the left to the one on the right, contradicting unigraphicity since $B_1$ is not a triangle.}
	\label{pic:003}
\end{figure}

Summarising, either $G$ is formed by a set of triangles $\calB$ sharing a vertex $u$, a set of triangles $\calB'$ sharing a vertex $v$, and a $uv$-path, or $G$ is formed by a set of cyclic blocks $\calB$ sharing a vertex $u$, a set of blocks $\calB'$ (either all cyclic or all acyclic) sharing a vertex $v$, and a $uv$-path of length $\leq 2$. In the former scenario, $s'$ is
\[x,y,2^{(x-1)+(y-1)+\ell},0^{2+(x-1)/2+(y-1)/2}\]
i.e. $s$ is of type C.

In the rest of this section, we will focus on the latter scenario. Here we start by ruling out that $v$ is adjacent to two or more elements from $Y$ (i.e., each block of $\calB'$ is cyclic). Indeed, assume for contradiction that $vy_1,vy_2,\dots,vy_l\in E$, $l\geq 2$, thus $\deg_G(v)=l+1$. We remark that planar, cyclic blocks with a region containing all of the vertices (i.e. polygons possibly with some diagonals) have at least two vertices of degree $2$. This is certainly true for a fixed $B_1\in\calB$, and let $w\in V(B_1)$ be such a vertex. Its degree in $G$ is $2$. We then take
\[G-vy_2-\dots-vy_l+wy_2+\dots+wy_l,\]
so that in the new graph the degrees of $v,w$ have swapped, i.e. $s$ is the same but the old and new graphs are non-isomorphic, contradiction.

We can now give an upper bound for the order of $F$. The $3$-polytope contains the vertex of eccentricity one $v_1$, $a$ many of degree $0$ in $G$, at most three on the $uv$-path, and $\#V(B_j)-1$ more for each $B_j\in\calB,\calB'$:
\begin{equation*}
p\leq 1+a+3+\sum_{j=1}^{k}(\#V(B_j)-1),
\end{equation*}
where $k=\#\calB+\#\calB'$. We 
invoke \cite[Lemma 5]{mafpo4},
\begin{equation}
\label{eqn:d2}
a\geq 2+\sum_{j=1}^{k}(\#V(B_j)-2),
\end{equation}
to obtain
\begin{equation}
\label{eqn:pub1}
p\leq 2a+2+k.
\end{equation}
On the other hand, each cyclic block of $G$ is of order at least three, and at least one block is of order at least four, otherwise we would be in the case where all cyclic blocks are triangles. Therefore, \eqref{eqn:d2} also yields
\begin{equation}
\label{eqn:pub2}
a\geq 2+(3-2)(k-1)+(4-2)=3+k.
\end{equation}
We substitute \eqref{eqn:pub2} into \eqref{eqn:pub1} to see that
$p\leq 3a-1$ in this scenario. 

The arguments of this section imply the following.

\begin{prop}
	\label{prop:rec3}
	Assume that $s$ is unigraphic, $G$ (Definition \ref{def:G}) has exactly one cyclic component, and $B$ in \eqref{eqn:B} is not $2$-connected. Suppose further that there is no cyclic block of $G$ that contains only vertices that are separating in $G$. Then either $G$ is of type C, or $p\leq 3a-1$.
\end{prop}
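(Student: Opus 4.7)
The plan is to consolidate the case analysis developed throughout this section into a structured proof that each configuration either matches type C exactly or forces the linear bound $p\le 3a-1$. I would organise it around the tree that appears after all cyclic blocks are removed.

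First, I would invoke the standing assumption of the section: each cyclic block contains exactly one vertex that is separating in $G$. Combined with Lemma \ref{le:notree} and the caterpillar fact for tree components, this tells us that after deleting the cyclic blocks we are left with a caterpillar $T=\calC(x_1,\dots,x_\ell)$ (possibly trivial). Thus $G$ has either a single separating vertex $u$ shared by all cyclic blocks ($T$ trivial), or two separating vertices $u,v$ that are the endpoints of $T$ and that belong to disjoint families $\calB,\calB'$ of cyclic blocks. A short edge-swap argument shows that $u$ has no neighbour in $Y$, and that $v$ has a neighbour in $Y$ only if $\calB'=\emptyset$.

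Second, I would rule out non-trivial caterpillars other than paths. Using the transformation $G-yc_1+c_1u_1-u_1u_2+u_2y$ depicted in Figure \ref{pic:002}, I would argue that if some central vertex $c_j$ of $T$ has degree $\ge 3$, one can relocate a pendant so that removing the cyclic blocks from the new graph leaves two non-trivial trees, contradicting unigraphicity. Hence $T=\calC(2,\dots,2)$. Then, via the transformation $G-c_2c_3+c_2u+c_3w-wu$ depicted in Figure \ref{pic:003}, I would argue that if the $uv$-path has length at least $3$, the cyclic block adjacent to an endpoint must be a triangle, because otherwise one can trade a region of the block for a new triangular region along the path without altering $s$. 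Throughout, the isolated vertices in $Z$ are shifted along $H$ to keep $s$ constant.

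At this stage only two configurations survive: (a) every cyclic block is a triangle, giving a family of triangles at $u$, a family of triangles at $v$, and a $uv$-path; direct computation then shows $s'$ has the form $x,y,2^{(x-1)+(y-1)+\ell},0^{2+(x-1)/2+(y-1)/2}$, which translates to type C for $s$. Or (b) the $uv$-path has length at most $2$ and the blocks are more general. For (b), I would first eliminate the possibility that $v$ has two or more neighbours in $Y$ by swapping these pendants onto a degree-$2$ vertex of a block at $u$, obtaining a non-isomorphic realisation. Then, counting gives $p\le 1+a+3+\sum_{j=1}^{k}(|V(B_j)|-1)$, while \cite[Lemma 5]{mafpo4} yields $a\ge 2+\sum_{j=1}^k(|V(B_j)|-2)$, so $p\le 2a+2+k$. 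Since at least one block has order $\ge 4$ (otherwise we would already be in case (a)), the same lemma produces the auxiliary inequality $a\ge 3+k$, and substituting this gives $p\le 3a-1$.

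The main obstacle is a bookkeeping one rather than a conceptual one: each edge swap must be checked to preserve planarity, $3$-connectedness of $F$ (after reinstating $v_1$), and the multiset $s$, while simultaneously producing a graph genuinely non-isomorphic to the original. In particular one must verify that the reshuffling of $Z$-vertices along $H$ leaves every listed valency intact, and that the ``new'' graph really cannot be obtained from the old one by a relabelling that respects the cycle $H$.
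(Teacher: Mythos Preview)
Your proposal is correct and follows the paper's own argument essentially step for step: the same dichotomy (one versus all separating vertices per cyclic block), the same reduction of $T$ to a bare path via the swap in Figure~\ref{pic:002}, the same triangle-forcing swap of Figure~\ref{pic:003} when the path is long, the same pendant-swap to rule out $v$ having multiple neighbours in $Y$, and the identical counting via \cite[Lemma~5]{mafpo4} to reach $p\le 2a+2+k$ and then $p\le 3a-1$. Your closing paragraph on bookkeeping accurately flags the routine verifications the paper leaves implicit.
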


\subsection{$B$ is not $2$-connected, case 2}
\label{sec:c3}
In this section we suppose that $B$ in \eqref{eqn:B} is not $2$-connected, and moreover that there exists a cyclic block $B_1$ of $G$ that contains only vertices that are separating in $G$.

We claim that there exists a cyclic endblock $B_j\neq B_1$. Indeed, there are always at least two endblocks, and if both were acyclic, then there would be in $G$ two disjoint paths of three or more vertices each, contradicting previous arguments (this is because one endpoint of an acyclic endblock of $B$, i.e. copy of $K_2$, is necessarily adjacent to one or more elements of $Y$, or the endpoint would be itself an element of $Y$, and thus the copy of $K_2$ would not be a block of $B$ in the first place). Hence $B_j$ exists.

Planar, cyclic blocks with a region containing all of the vertices (i.e. polygons possibly with some diagonals) have at least two vertices of degree $2$. Let $w$ be a vertex of $B_j$ of degree $2$ in $B_j$, non-separating in $B$, and $u$ a vertex of degree $2$ in $B_1$ such that in $G-u$ there is still a path between $B_1,B_j$ ($u$ always exists, as $B_1$ has at least two vertices of degree $2$ in $B_1$) -- refer to Figure \ref{pic:004}. We may transform $G$ by moving the adjacencies of $u$ not in $V(B_1)$ to $w$ instead, and vice versa the adjacencies of $w$ not in $V(B_j)$ to $u$ (this does not affect $s$). By unigraphicity, we conclude that this operation produces an isomorphic graph. Now $u$ is separating in $G$ by definition, and by construction any neighbour of $w$ (save for the two in $V(B_j)$) belongs to $Y$ of \eqref{eqn:B} (it has degree $1$ in $G$). It follows that $u,w$ are adjacent to the same number $\alpha-2\geq 1$ of vertices in $Y$, where $\deg_G(u)=\deg_G(w)=\alpha$. 
\begin{figure}[h!]
	\centering
	\includegraphics[width=4cm,clip=false]{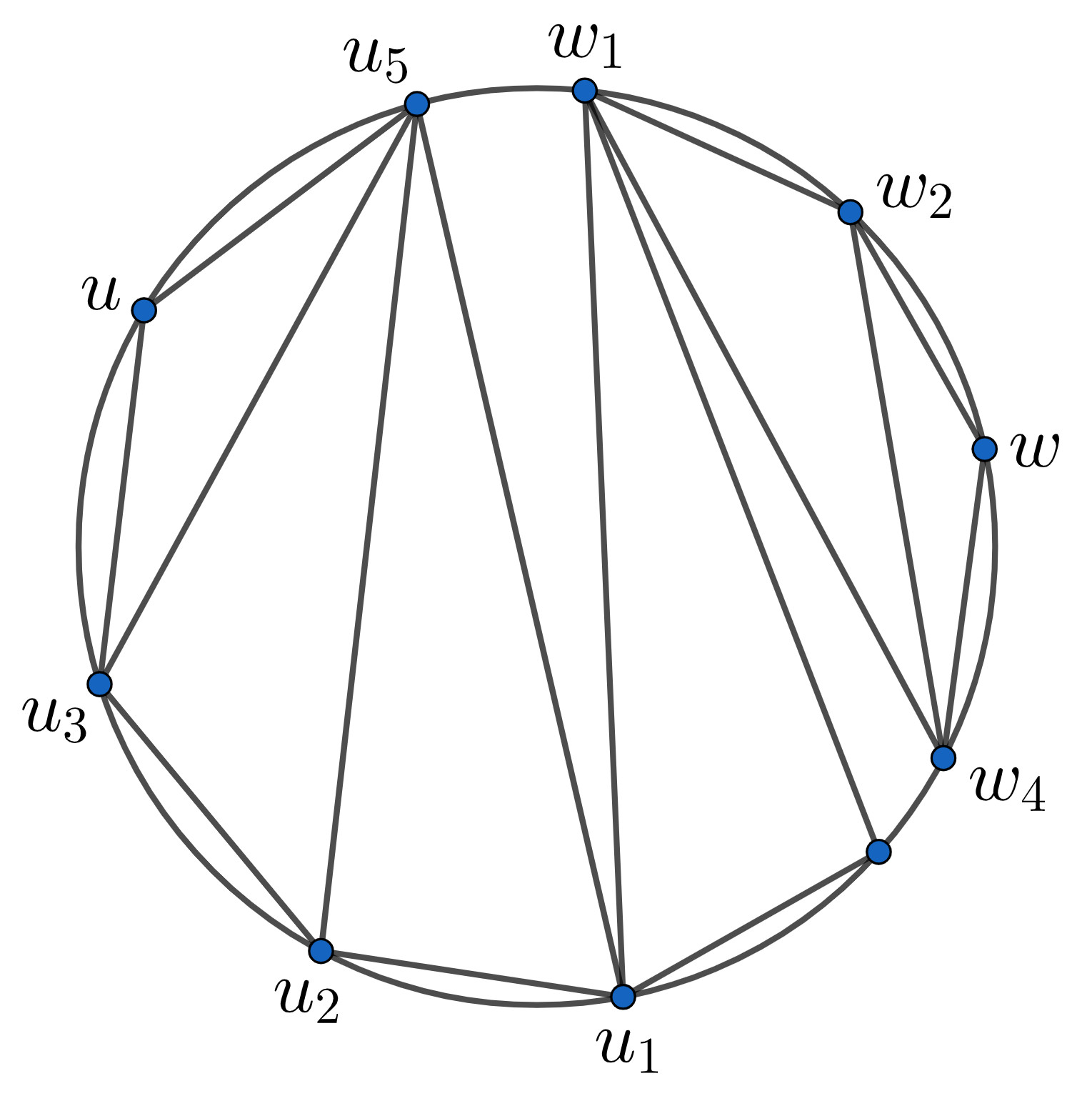}
	\caption{In this example, we assume that $B_j$ is an endblock, $V(B_j)=\{w_1,w_2,w,w_4\}$, and $V(B_1)=\{u_1,u_2,u_3,u,u_5\}$. Moreover, all vertices of $B_1$ are separating in $G$, $w_1$ is separating in $B$, and $w_2,w,w_4$ are separating in $G$ but not in $B$ (only a subgraph of $G$ is depicted). We note that $w$ is a vertex of $B_j$ of degree $2$ in $B_j$, that is non-separating in $B$. Also, $u$ is a vertex of degree $2$ in $B_1$ s.t. in $G-u$ there is still a path between $B_1,B_j$. Swapping adjacencies between $w,u$ (other than $ww_2,ww_4,uu_3,uu_5$) must produce an isomorphic graph. Therefore, all other vertices adjacent to $u$ must be in $Y$, and $\deg_G(u)=\deg_G(w)=\alpha\geq 3$.}
	\label{pic:004}
\end{figure}

The arguments of section \ref{sec:2con} now imply that \textit{all} vertices of a cyclic endblock $B_j$ that are non-separating in $B$ are adjacent to one or more elements of $Y$. 
Note that all vertices of $B_j$ are separating in $G$, and exactly one, $w_0$ say, is separating in $B$.

Via an argument similar to previous sections, we now show that actually an endblock of $G$ cannot be a copy of $K_2$, except possibly if there are only the two blocks $B_1$ and $K_2$ in $G$. Indeed, by contradiction call $V(K_2)=\{w_0',w_1'\}$, $w_0'$ separating in $B$ and $w_1'$ non-separating in $B$. By construction, $w_1'$ is adjacent to $y_1',\dots,y_i'\in Y$, $i\geq 1$, and we have seen above that there exists $y\in Y$, $w_1'y\not\in E$. We perform
\begin{equation}
\label{eqn:op2}
G-w_1'y_1'-\dots-w_1'y_i'+yy_1'+\dots+yy_i'
\end{equation}
and obtain a new graph, that is non-isomorphic to $G$ as soon as there are two or more cyclic blocks in $G$. We reach a contradiction unless there are only two blocks $B_1$ and $K_2$, and moreover $B_1$ must be a cycle in this case.

Still by the arguments of section \ref{sec:2con}, a cyclic endblock $B_j$ is either a triangle or a triangulated quadrilateral or pentagon: the arguments for the only block $B$ of $G$ in section \ref{sec:2con} apply here to $B_j$, since all vertices of $B_j$ are separating in $G$, and exactly one is separating in $B$.

If the cyclic endblocks of $G$ are all triangles, then we may possibly have $\alpha=3$; if one of them is a triangulated quadrilateral or pentagon, $\alpha\geq 4$. Let's see that actually there cannot be two endblocks $B_j,B_{j'}$ that are both triangles. Indeed, in this scenario, $V(B_j)=\{w_0,w_1,w_2\}$, $w_0$ separating in $B$, $w_1y_1,w_2y_2\in E$, $y_1,y_2\in Y$, and likewise $V(B_{j'})=\{w_0',w_1',w_2' \}$, $w_0'$ separating in $B$, $w_1'y_1',w_2'y_2'\in E$, $y_1',y_2'\in Y$. We consider the transformation
\[G-w_1y_1-w_2y_2+w_1w_2'+w_2w_2'-w_0'w_2'-w_1'w_2'+w_1'y_1+w_0'y_2\]
that alters $F$ but not $s$ to 
reach a contradiction and rule out this scenario.

To summarise, either there are exactly two blocks in $G$ -- a cycle and a copy of $K_2$, or at least one endblock is a triangulated quadrilateral or pentagon, and $\alpha\geq 4$. Suppose for the moment that we are in the latter case. Let \[V(B_j)=\{w_0,w_1,\dots,w_l\},\]
with $w_0$ separating in $B$, and $2\leq l\leq 4$. Since $w_1,\dots,w_l$ are adjacent to one or more elements of $Y$, it must hold that $\deg_{B_j}(w_0)=2$ by unigraphicity, with $w_0w_1,w_0w_2\in E$, say -- refer to Figure \ref{pic:08}. We then transform $G$ by
\begin{equation}
\label{eqn:op}
G-uy_1-uy_2+uw_1+uw_2-w_0w_1-w_0w_2+w_0y_1+w_0y_2,
\end{equation}
where $u$ is adjacent to at least $2$ elements $y_1,\dots,y_{\alpha-2}\in Y$, and in $G-u$ there is still a path between $B_j$ and a vertex in the same block as $u$ -- refer to Figure \ref{pic:10}.

\begin{figure}[h!]
	\centering
	\begin{subfigure}{.54\textwidth}
		\centering
		\includegraphics[width=3.5cm,clip=false]{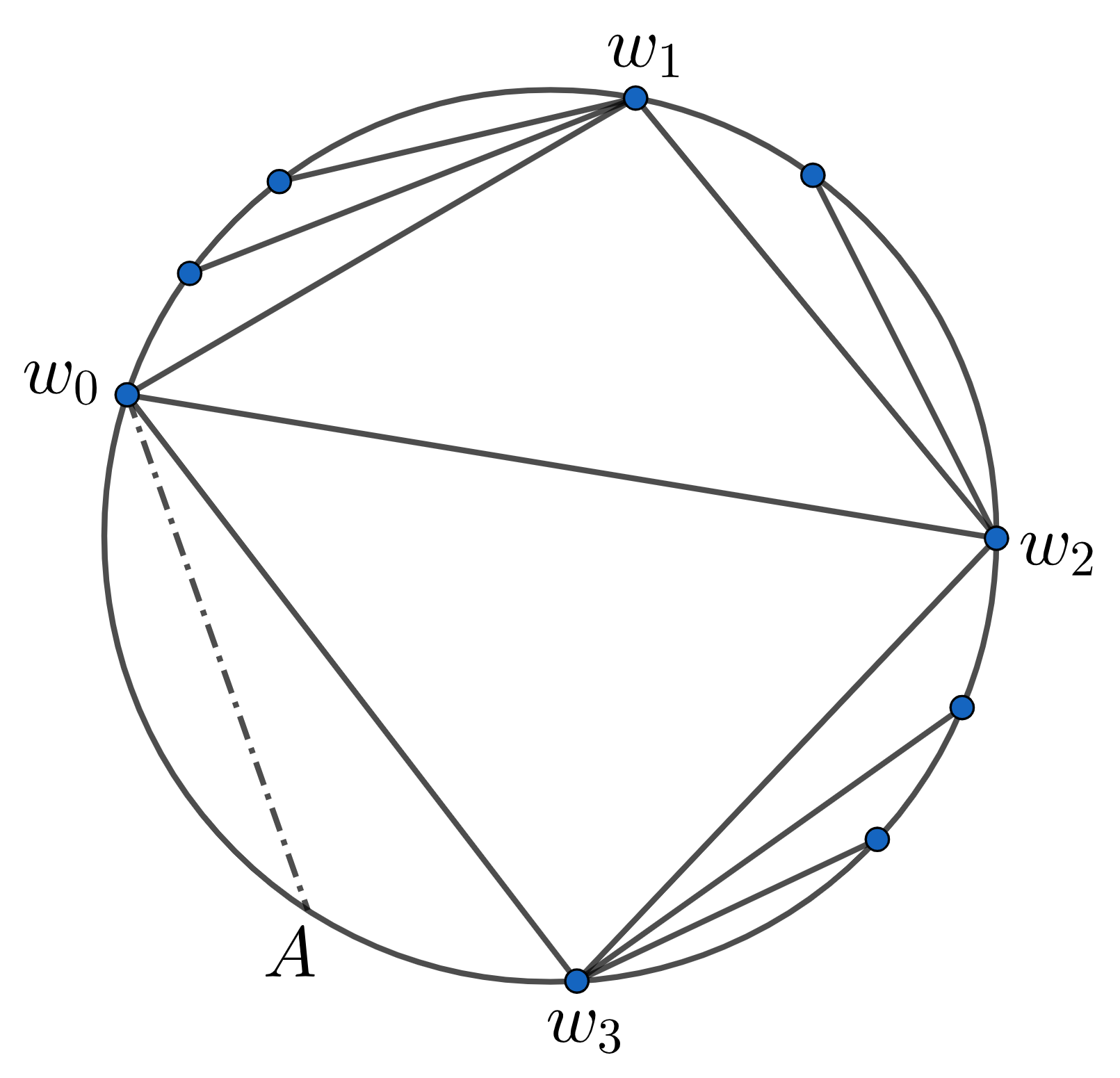}	
		\hspace{1cm}
		\includegraphics[width=3.5cm,clip=false]{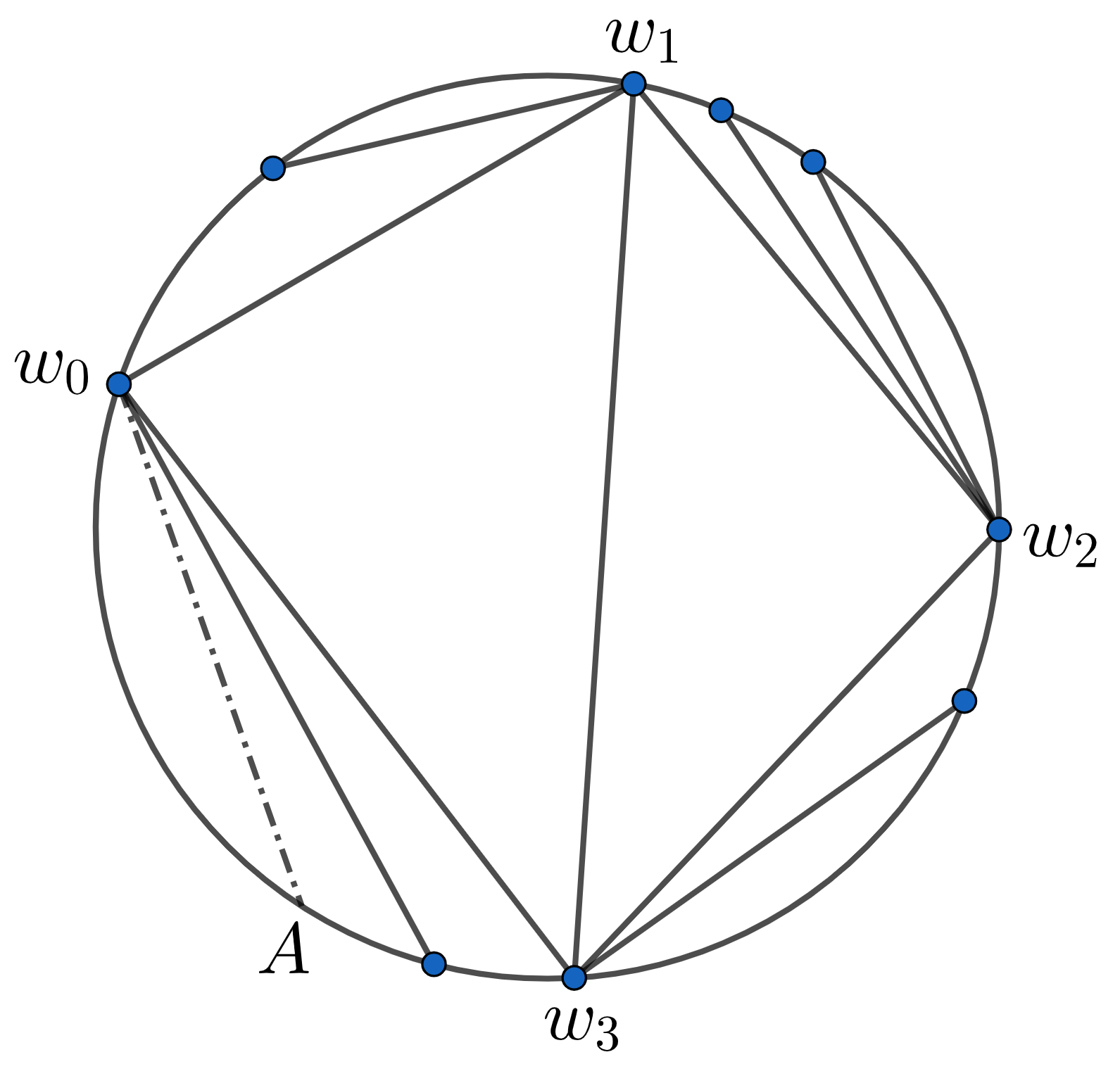}
		\caption{The endblock $B_j$ is a diamond graph. The dashed-dotted line represents edges incident to a non-empty graph $A$. By unigraphicity, the degrees of $w_1,w_2,w_3$ in $G$ must be the same value $\alpha\geq 4$ (in this example $\alpha=4$.) The transformation from the first graph to the second graph is applicable if and only if $\deg_{B_j}(w_0)\geq 3$.}
		\label{pic:08}
	\end{subfigure}
	\hspace{0.25cm}
	\begin{subfigure}{.42\textwidth}
		\centering
		\includegraphics[width=3.5cm,clip=false]{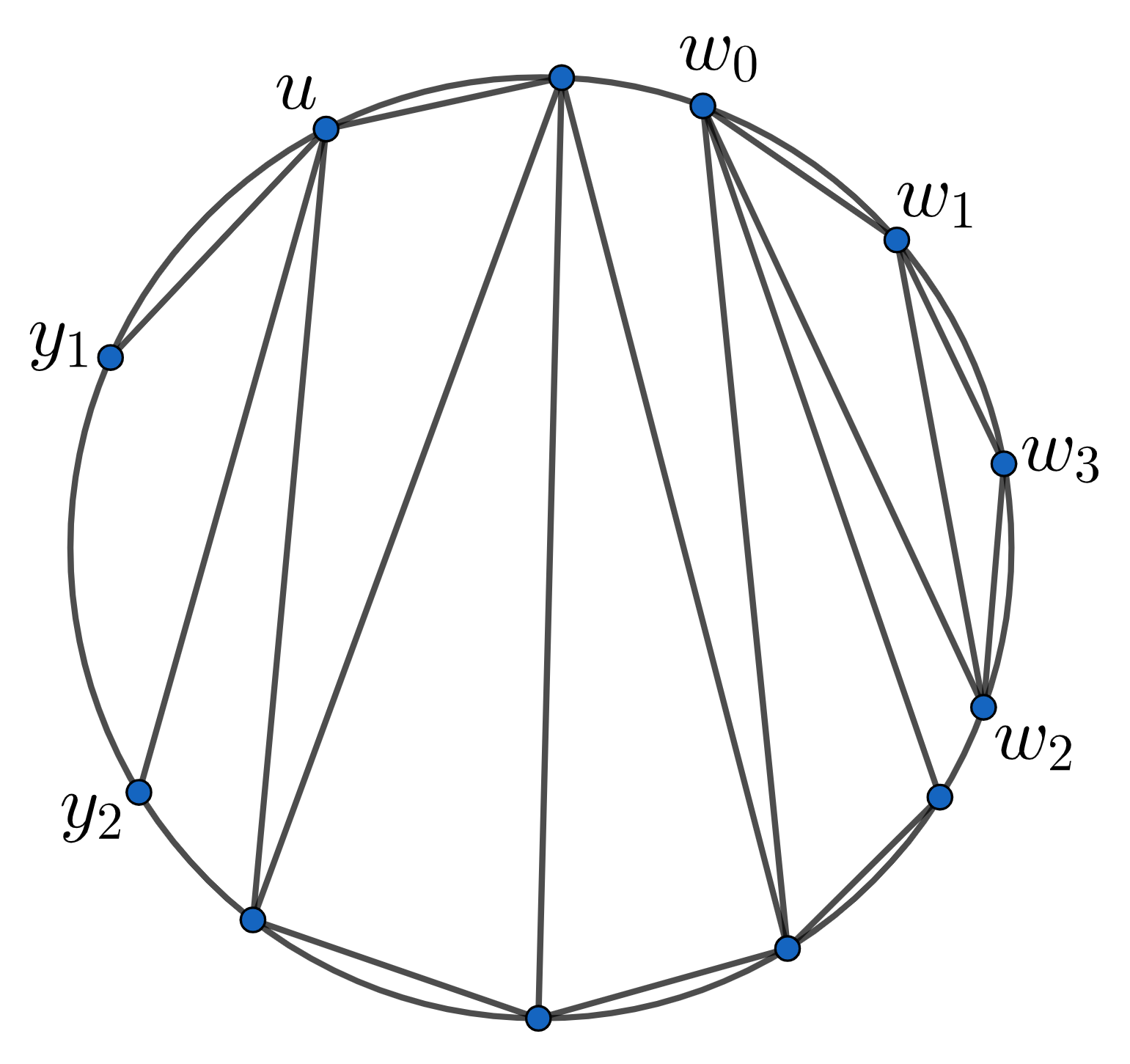}	
		\hspace{1cm}
		\includegraphics[width=3.5cm,clip=false]{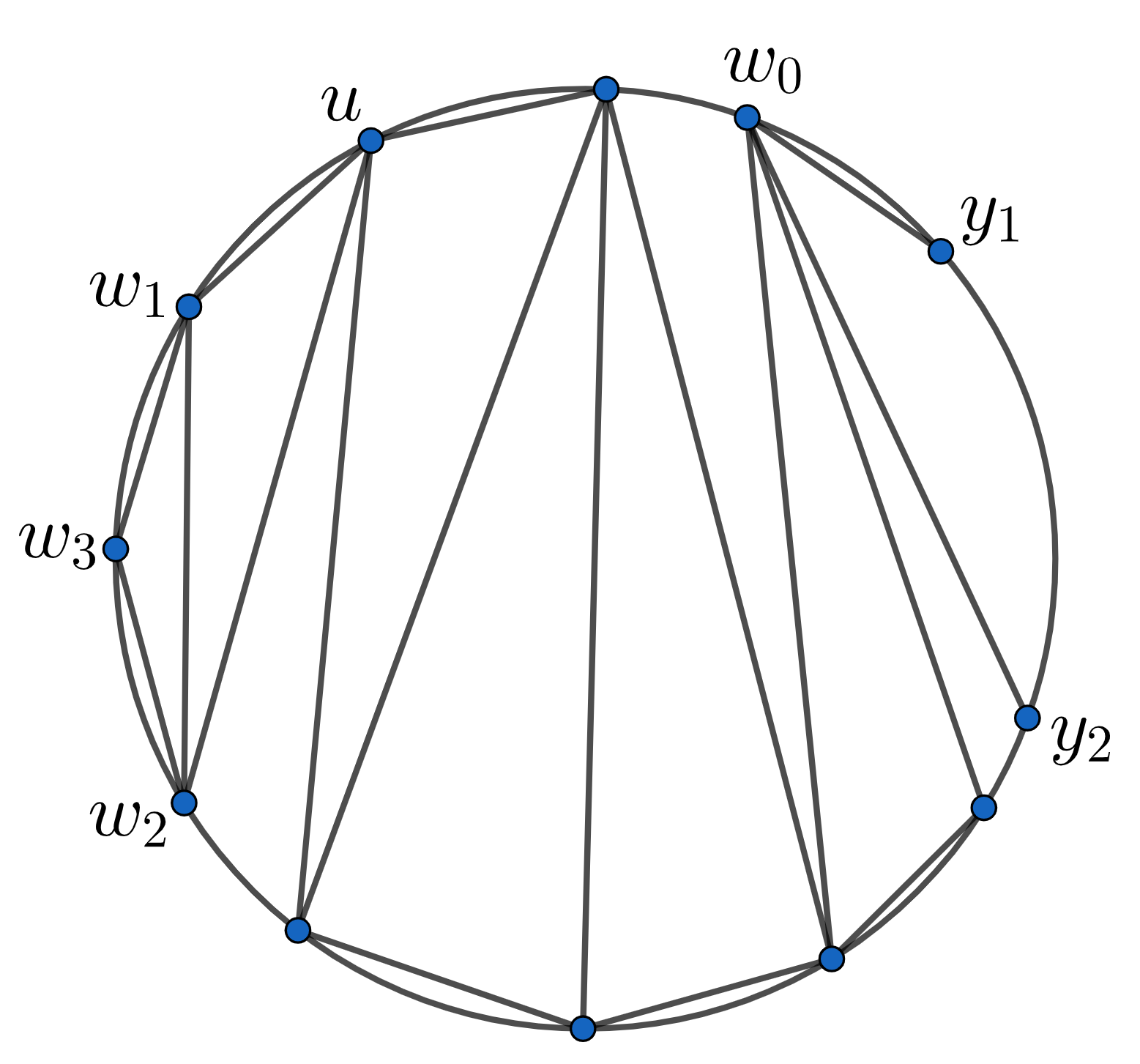}
		\caption{An application of \eqref{eqn:op} transforms the first graph into the second one. Here $\alpha=4$, and the endblock $B_j$ is a diamond graph ($l=3$). Only a subgraph of $G$ is depicted.}
		\label{pic:10}
	\end{subfigure}
	\caption{The case $\alpha\geq 4$.}
	\label{pic:005}
\end{figure}

By unigraphicity, \eqref{eqn:op} must not change $F$, that is to say, all possible $u$ must belong to a block sharing the vertex $w_0$ with $B_j$. There are thus exactly two cyclic blocks in $G$, one of them being a triangulated quadrilateral or pentagon, and the other a triangle or triangulated quadrilateral or pentagon. It is straightforward to see that
\[s': k^b,1^c,0^a,\]
with $6\leq b\leq 10$, $k\geq 5$, $c$ depending only on $k$, and $5\leq a\leq 8$. One checks all possibilities for $b$ to rule out this option entirely.

Therefore, we finally see that $B$ has exactly two blocks, one of them being a copy of $K_2$ on the vertices $w_0,w_1'$ say, and the other (i.e. $B_1$) a cycle. We have seen that $B_1$ must also be a triangle or triangulated quadrilateral or pentagon, hence it is a triangle, and we write
$V(B_1)=\{w_0,w_1,w_2\}$. One quickly sees that for $s$ unigraphic, the degrees in $G$ of $w_0,w_1,w_2,w_1'$ must all be equal. We have obtained a graphic sequence $s$ of type D. We summarise the work of this section as follows.
\begin{prop}
	\label{prop:rec4}
Assume that $s$ is unigraphic, $G$ (Definition \ref{def:G}) has exactly one cyclic component, and $B$ in \eqref{eqn:B} is not $2$-connected. Suppose further that there is a cyclic block of $G$ that contains only vertices that are separating in $G$. Then $s$ is of type D.
\end{prop}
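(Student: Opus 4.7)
The plan is to push the distinguished cyclic block $B_1$ (all of whose vertices separate $G$) against the other endblocks via degree-preserving local rewirings, until unigraphicity forces $G$ into the single configuration of type D: a triangle with one matched pendant. First I would produce a second cyclic endblock $B_j\ne B_1$. Since $B$ has at least two endblocks, and two acyclic endblocks would yield two disjoint non-trivial paths in $G$ (already excluded by the arguments of Section~\ref{sec:c2}), some endblock besides $B_1$ must be cyclic. Because a planar cyclic block whose outer region carries every vertex has at least two degree-$2$ vertices, I can choose $w\in V(B_j)$ of $B_j$-degree $2$ and non-separating in $B$, and $u\in V(B_1)$ of $B_1$-degree $2$ with $G-u$ still connecting $B_1$ to $B_j$. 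Exchanging the external neighbours of $u$ and $w$ preserves $s$, so unigraphicity forces the swap to yield an isomorphic graph. This pins down $\deg_G(u)=\deg_G(w)=:\alpha$ and places every external neighbour of $u$ inside $Y$.

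The second step is to pare down the admissible block configurations. Applying the Section~\ref{sec:2con} analysis to the endblock $B_j$, whose non-separating-in-$B$ vertices must each be adjacent to elements of $Y$, forces $B_j$ to be a triangle, a diamond, or a triangulated pentagon. I then exclude every configuration other than the target. Coexistence of an acyclic endblock $K_2$ with any other cyclic endblock is destroyed by the rewiring~\eqref{eqn:op2}, which reroutes all pendants off the $K_2$-tip onto a spare $y\in Y$ and produces a non-isomorphic realisation unless $B$ consists of only $B_1$ and a single $K_2$. Two simultaneous triangular endblocks are ruled out by a cut-and-paste across their pendants which preserves $s$ while altering $F$. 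The remaining $\alpha\ge 4$ case (diamond or triangulated pentagon endblock) is handled by~\eqref{eqn:op}: unigraphicity forces every vertex with at least two pendants in $Y$ to lie in a block meeting $B_j$ at $w_0$, so $G$ collapses to at most two tightly constrained cyclic blocks and $s'$ takes the form $k^b,1^c,0^a$ with $6\le b\le 10$; a short enumeration over $k$ and $b$ kills every such case.

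The only surviving configuration is therefore $B_1$ itself a triangle $\{w_0,w_1,w_2\}$ together with a single pendant edge $w_0w_1'$ (and the isolated vertices $Z$). A final application of unigraphicity, against the obvious pendant-relocation swap among these four vertices, forces $\deg_G(w_0)=\deg_G(w_1)=\deg_G(w_2)=\deg_G(w_1')$, which is exactly the defining property of type D. The main obstacle I anticipate is the $\alpha\ge 4$ reduction: keeping track of which of~\eqref{eqn:op} and~\eqref{eqn:op2} remain applicable in each block configuration, and enumerating the handful of residual sequences $k^b,1^c,0^a$ without overlooking a subcase, is the delicate part; everything else is systematic use of unigraphicity against controlled local surgeries together with the structural facts already imported from Sections~\ref{sec:2con} and~\ref{sec:c2}.
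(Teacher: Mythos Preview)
Your proposal is correct and follows essentially the same route as the paper's proof: the same swap between $u\in B_1$ and $w\in B_j$ to pin down $\alpha$ and force external neighbours into $Y$, the same reductions via \eqref{eqn:op2} and \eqref{eqn:op}, the same exclusion of two triangular endblocks by a cut-and-paste, and the same residual enumeration of $s':k^b,1^c,0^a$ with $6\le b\le 10$ before landing on the triangle-plus-$K_2$ configuration of type D. One small wording point: $B_1$ need not itself be an endblock of $B$ (its vertices are separating in $G$, possibly only via pendants in $Y$), so ``a second cyclic endblock'' is slightly imprecise; the actual claim is simply that some endblock of $B$ other than $B_1$ is cyclic.
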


\subsection{Concluding the proof of Theorem \ref{thm:1}}
\label{sec:end}
Gathering the results of Propositions \ref{prop:rec1}, \ref{prop:rec2}, \ref{prop:rec3}, and \ref{prop:rec4}, we deduce that if $s$ is unigraphic as a $3$-polytope of radius one satisfying $a\geq 3$, then either $s$ is one of B1, B2, B3, C, D, or $p\leq 3a-1$, or $s$ is $14,5^9,3^5$. The proof of Theorem \ref{thm:1} is thus complete.

\addcontentsline{toc}{section}{References}
\bibliographystyle{abbrv}
\bibliography{bibgra}

\Addresses

\end{document}